\documentclass[reqno, 11pt]{amsart}  

\usepackage{amssymb}
\usepackage{mathrsfs}

\usepackage{graphicx}
\usepackage[usenames,dvipsnames,svgnames]{xcolor}
\usepackage[shortlabels]{enumitem} 

\usepackage[colorlinks, urlcolor=Navy, linkcolor=Navy, citecolor=Navy]{hyperref} 



\usepackage[all]{xy}
\usepackage{tikz}
\usepackage{tikz-cd}
\usetikzlibrary{arrows,shapes,chains,matrix,positioning,scopes,cd,patterns}


\usepackage{todonotes}

\DeclareFontFamily{U}{mathx}{\hyphenchar\font45}
\DeclareFontShape{U}{mathx}{m}{n}{
      <5> <6> <7> <8> <9> <10>
      <10.95> <12> <14.4> <17.28> <20.74> <24.88>
      mathx10
      }{}
\DeclareSymbolFont{mathx}{U}{mathx}{m}{n}
\DeclareFontSubstitution{U}{mathx}{m}{n}
\DeclareMathAccent{\widecheck}{0}{mathx}{"71}
\DeclareMathAccent{\wideparen}{0}{mathx}{"75}


\usepackage[top=30truemm,bottom=30truemm,left=20truemm,right=20truemm]{geometry}

\theoremstyle{plain}
\newtheorem{thm}{Theorem}[section]
\newtheorem{lem}[thm]{Lemma}

\newtheorem{prop}[thm]{Proposition}

\newtheorem{thm*}{Theorem}
\theoremstyle{definition}
\newtheorem{dfn}[thm]{Definition}
\newtheorem{exa}[thm]{Example}
\newtheorem{rem}[thm]{Remark}

\numberwithin{equation}{section}


\DeclareMathOperator{\Hom}{Hom}

\DeclareMathOperator{\End}{End}
\DeclareMathOperator{\Ext}{Ext}

\renewcommand{\mod}{\mathop{\mathsf{mod}}\nolimits}
\DeclareMathOperator{\Mod}{\mathsf{Mod}}

\DeclareMathOperator{\add}{\mathsf{add}}
\DeclareMathOperator{\Gen}{\mathsf{Gen}}
\DeclareMathOperator{\gen}{\mathsf{gen}}
\DeclareMathOperator{\proj}{\mathsf{proj}}
\DeclareMathOperator{\Proj}{\mathsf{Proj}}

\DeclareMathOperator{\Fl}{\mathsf{fl}}





\renewcommand{\Im}{\mathop{\mathrm{Im}}}

\DeclareMathOperator{\projdim}{proj\text{.}dim}

\DeclareMathOperator{\Dim}{\underline{dim}}

\DeclareMathOperator{\rank}{rank}
\DeclareMathOperator{\urank}{\underline{\rank}}


\DeclareMathOperator{\thick}{\mathsf{thick}}


\DeclareMathOperator{\Spec}{Spec}

\newcommand{\supp}{\operatorname{Supp}\nolimits}





\renewcommand{\epsilon}{\varepsilon}


\newcommand{\ep}{\epsilon}

\renewcommand{\L}{\Lambda}


\newcommand{\bN}{\mathbb{N}}

\newcommand{\bT}{\mathbb{T}}

\newcommand{\bZ}{\mathbb{Z}}

\newcommand{\m}{\mathfrak{m}}

\newcommand{\p}{\mathfrak{p}}
\newcommand{\q}{\mathfrak{q}}

\newcommand{\cA}{\mathcal{A}}

\newcommand{\cS}{\mathcal{S}}
\newcommand{\cT}{\mathcal{T}}

\newcommand{\cX}{\mathcal{X}}


\newcommand{\sD}{\mathsf{D}}

\newcommand{\sK}{\mathsf{K}}





\newcommand{\tors}{\mathop{\mathsf{tors}}}

\newcommand{\ftors}{\mathop{\mathsf{f}\text{-}\mathsf{tors}}}

\newcommand{\stors}{\mathop{\mathsf{s}\text{-}\mathsf{tors}}}

\newcommand{\excep}{\mathop{\mathsf{excep}}}

\newcommand{\twopsilt}{\mathop{\mathsf{2}\text{-}\mathsf{psilt}}}
\newcommand{\twosilt}{\mathop{\mathsf{2}\text{-}\mathsf{silt}}}

\newcommand{\stilt}{\mathop{\mathsf{s}\text{-}\mathsf{tilt}}}
\newcommand{\ptilt}{\mathop{\mathsf{ptilt}}}

\newcommand{\Cv}{\mathop{\mathsf{Cv}}}
\newcommand{\Clus}{\mathop{\mathsf{Clus}}}
\newcommand{\pClus}{\mathop{\mathsf{pClus}}\nolimits}


\begin{document}
\title[Schur roots and Tilting modules of acyclic quivers over commutative rings]{Schur roots and Tilting modules of acyclic quivers over commutative rings}
\author[O. Iyama]{Osamu Iyama}
\author[Y. Kimura]{Yuta Kimura}
\address{Osamu Iyama : Graduate School of Mathematical Sciences, The University of Tokyo, 3-8-1 Komaba Meguro-ku Tokyo 153-8914, Japan}
\email{iyama@ms.u-tokyo.ac.jp}
\address{Yuta Kimura : Department of Mechanical Engineering and Informatics, Faculty of Engineering, Hiroshima Institute of Technology, 2-1-1 Miyake, Saeki-ku Hiroshima 731-5143, Japan}
\email{y.kimura.4r@cc.it-hiroshima.ac.jp}
\keywords{quiver, Schur root, cluster, silting complex, support tilting module, torsion class}
\begin{abstract}
Let $Q$ be a finite acyclic quiver and $\cA_Q$ the cluster algebra of $Q$. It is well-known that for each field $k$, the additive equivalence classes of support tilting $kQ$-modules correspond bijectively with the clusters of $\cA_Q$. The aim of this paper is to generalize this result to any ring indecomposable commutative Noetherian ring $R$, that is, the additive equivalence classes of 2-term silting complexes of $RQ$ correspond bijectively with the clusters of $\cA_Q$.
As an application, for a Dynkin quiver $Q$, we prove that the torsion classes of $\mod RQ$ corresponds bijectively with the order preserving maps from $\Spec R$ to the set of clusters.
\end{abstract}
\maketitle
\section{Introduction}\label{section-introduction}
The class of silting complexes complements the class of tilting complexes from a point of view of mutation, and parametrizes important structures in derived and module categories \cite{KV,AI,BY}.
Tilting and silting theory of \emph{Noetherian $R$-algebras} (that is, algebras over commutative Noetherian rings $R$ which are finitely generated $R$-modules) have been widely studied e.g.\ \cite{Gnedin, IK, Iyama-Wemyss-maximal, Kimura}.

The aim of this paper is to study silting complexes over the path algebra $RQ$ of a finite acyclic quiver $Q$ over a commutative Noetherian ring $R$.
Thanks to Kac's theorem \cite{Kac} and the categorification of cluster algebras \cite{CaKe}, the set of clusters of the cluster algebra $\cA_Q$ of $Q$ \cite{FZ} can be described in terms of the root system. More explicitly, using the notion of real Schur roots and the inductive formula of $E$-invariants due to Schofield and Crawley-Boevey \cite{CB2,Sch}, we introduce the set $\Clus(Q)$ of clusters of $\cA_Q$ equipped with a partial order (see Definition \ref{dfn-cluster}). A \emph{precluster} is a subset of a cluster, and a \emph{positive precluster} is a precluster which does not contain negative simple roots. We denote by $\pClus(Q)$ and $\pClus_+(Q)$ the set of preclusters and positive preclusters of $\cA_Q$ respectively.
We denote by $\twosilt RQ$ (respectively, $\twopsilt RQ$, $\stilt RQ$, $\ptilt RQ$) the set of additive equivalence classes of 2-term silting complexes (respectively, 2-term presilting complexes, support tilting modules, partial tilting modules) of $RQ$.
The following our main result generalizes a bijection due to Ingalls-Thomas \cite{Ingalls-Thomas} for the case $R$ is a field.

\begin{thm}[Theorem \ref{thm-cluster-siltm}]\label{intro-thm-cluster-siltm}
Let $R$ be a commutative Noetherian ring which is ring indecomposable, and $Q$ a finite acyclic quiver. Then we have isomorphisms of posets
\[
\Clus(Q) \simeq \twosilt RQ\simeq\stilt RQ
\]
and bijections
\[\pClus(Q) \simeq \twopsilt RQ\ \mbox{ and }\ \pClus_+(Q) \simeq \ptilt RQ.\]
\end{thm}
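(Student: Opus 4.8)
The plan is to treat the four comparisons separately but build them on one common combinatorial engine: the theory of real Schur roots and the inductive $E$-invariant formula of Schofield and Crawley–Boevey, which is what the poset $\Clus(Q)$ is defined from. Over a field $k$, the Ingalls–Thomas bijection identifies indecomposable rigid $kQ$-modules with real Schur roots, identifies $\mathsf{ind}$-summands of tilting modules with preclusters, and matches compatibility of roots (vanishing of the relevant $E$-invariant / $\Ext$) with the condition that a set of indecomposable presilting complexes can be completed to a silting complex. The first task is therefore to reduce everything over $RQ$ to the residue fields $k(\mathfrak p)$, $\mathfrak p \in \Spec R$. Concretely, I would show that a two-term complex $P^{-1} \to P^0$ of finitely generated projective $RQ$-modules is presilting (resp. silting) if and only if $k(\mathfrak p)Q \otim_{RQ} (P^{-1}\to P^0)$ is presilting (resp. silting) for every $\mathfrak p$, and that the "shape" of the complex — i.e.\ the ranks recorded by its $g$-vector — is locally constant on $\Spec R$. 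Ring-indecomposability of $R$ forces $\Spec R$ to be connected, so this local-constancy pins down a single $g$-vector, hence a single real Schur root datum, independent of $\mathfrak p$.

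The key steps, in order, are: \textbf{(1)} Establish the projective-resolution / $g$-vector dictionary over $RQ$: every indecomposable two-term presilting complex has a well-defined $g$-vector, and over a connected base this $g$-vector lies in the set of (possibly negated) real Schur roots, matching the field case fiberwise. \textbf{(2)} Prove a "presilting $=$ fiberwise presilting" and "silting $=$ fiberwise presilting $+$ correct number of indecomposable summands" criterion for $RQ$; the count is governed by $\#Q_0$, exactly as for silting complexes over $kQ$. This is where I expect to use that $R$ is Noetherian (to control finitely generated projectives) and the general silting machinery of \cite{AI,BY}. \textbf{(3)} Transport the partial order: the poset structure on $\twosilt RQ$ is given by the standard silting order (generated cohomological $t$-structure containment), and on $\Clus(Q)$ by the order of Definition~\ref{dfn-cluster}; show the bijection from (1)–(2) is order-preserving and order-reflecting by checking it fiberwise against the Ingalls–Thomas order isomorphism over $k(\mathfrak p)$. \textbf{(4)} Handle the module-theoretic side: a support tilting $RQ$-module corresponds to a two-term silting complex concentrated so that $H^{-1}=0$ after deleting the vertices in the support of the "co-support"; equivalently, the presilting summands with $g$-vectors that are honest (positive) real Schur roots assemble the partial tilting modules, and positive preclusters are exactly those preclusters avoiding negative simple roots. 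Then $\stilt RQ \simeq \twosilt RQ$ and $\ptilt RQ \simeq \twopsilt RQ$ restricted to the positive locus, matching $\pClus(Q)$ and $\pClus_+(Q)$ respectively.

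The main obstacle will be step (2), specifically the passage from fiberwise silting to global silting over a non-local base: presilting-ness and the $g$-vector are controlled fiber by fiber by right-exactness of $k(\mathfrak p)Q \otimes_{RQ} -$ and Nakayama-type arguments, but showing that a two-term complex whose every fiber is silting is itself silting (i.e.\ that $\thick$ of it is all of $\per RQ$, or equivalently that the needed $\Hom$-vanishing $\Hom_{\Db(RQ)}(T, T[i])=0$ for $i\neq 0$ together with generation holds integrally) requires a gluing argument along $\Spec R$. I would run this via the bounded derived category $\Db(\mod RQ)$, using that $RQ$ has finite global dimension over $R$-regular parts and that "being silting" is an open-and-closed condition on $\Spec R$ — connectedness of $\Spec R$ (ring-indecomposability) then does the rest. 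The remaining steps are essentially bookkeeping on top of the field case \cite{Ingalls-Thomas} and the combinatorics of real Schur roots \cite{CB2,Sch,Kac,CaKe}.
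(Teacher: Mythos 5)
Your overall strategy (reduce to residue fields, use the field-case Ingalls--Thomas picture, use connectedness of $\Spec R$ to make the fiberwise data constant) captures part of what is needed, but it has a genuine gap at exactly the point you flag as the "main obstacle," and the fix you sketch does not address it. The hard direction is not recognizing that fiberwise-constant silting data exists, but \emph{producing a global object}: to get a bijection $\Clus(Q)\simeq\twosilt RQ$ you must exhibit, for every cluster $\cS$ and every ring-indecomposable commutative Noetherian $R$ (not a PID, not containing a field), an actual 2-term silting complex of $RQ$ realizing $\cS$, and you must show every 2-term silting complex is additively equivalent to such a model. Your steps (1)--(3) only produce a map \emph{from} (suitably decomposed) silting complexes \emph{to} root data; they never construct the inverse, and an "open-and-closed condition on $\Spec R$" argument cannot conjure the required lifts (this is the surjectivity of the localization maps $\twosilt RQ\to\twosilt R_\p Q$, which is precisely the nontrivial hypothesis in the relevant local-global criterion, Proposition \ref{prop-local-iso}). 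The paper supplies these global models by Crawley--Boevey's integral theory: each real Schur root $\alpha$ has an exceptional $\bZ Q$-lattice $M_\alpha$ (Proposition \ref{prop-CB-thm1}), and base change $M_\alpha^R=R\otimes_\bZ M_\alpha$, $P_\alpha^R=R\otimes_\bZ P_\alpha$ gives the maps $M_-^R$, $P_-^R$; the $E$-invariant formula then shows $\cS\in\pClus(Q)$ iff $P^R_\cS\in\twopsilt RQ$ (Lemma \ref{lem-comp-ptilt-a}), and Bongartz completion over a PID handles maximality (Lemma \ref{lem-comp-ptilt}). Nothing in your outline plays this role.

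A second concrete problem is your criterion "silting $=$ fiberwise presilting $+$ correct number of indecomposable summands governed by $\#Q_0$." Over a non-local base, $\proj RQ$ is not Krull--Schmidt, projective $R$-modules need not be free, indecomposable direct summands are not a well-defined invariant of an additive equivalence class, and the $g$-vector of an "indecomposable" 2-term complex need not live in $\bZ^{Q_0}$ (it lives in $K_0(R)^{Q_0}$). The paper deliberately counts summands only over fields (via \cite{ASS}) and uses Krull--Schmidt only over the local rings $R_\p$, where $\End_{\sD(R_\p Q)}(P_\alpha^{R_\p})=R_\p$ is local; the passage from local additive equivalences $\add P_\p=\add(P^R_{\cS'})_\p$ for all $\p$ to the global statement $\add P=\add P^R_{\cS'}$ is then done by \cite[Proposition 2.26]{Iyama-Wemyss-maximal}, and the poset isomorphisms are assembled from the local case ($R$ local: reduction modulo $\m$ via semiperfectness and \cite{AIR}) together with Proposition \ref{prop-local-iso} and, for $H^0:\twosilt RQ\to\stilt RQ$, the idempotent reduction of Proposition \ref{idempotent surjective}. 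To repair your proposal you would need to replace the counting/clopen argument by (i) an existence result for integral exceptional modules (Crawley--Boevey) giving the global models, and (ii) a local-to-global mechanism for additive equivalence and for the order, along the lines just described.
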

Now we explain our second aim.
Let $\L$ be a Noetherian $R$-algebra, and $\mod\L$ the category of finitely generated left $\L$-modules.
A subcategory $\cT$ of $\mod\L$ is called a \emph{torsion class} if it is closed under taking factor modules and extensions.
We denote by $\tors\L$ the set of all torsion classes of $\mod\L$. There are a number of results to give explicit descriptions of $\tors \L$, e.g.\ \cite{AIR, Gabriel, Ingalls-Thomas, IK, IK2, Stanley-Wang}.

When a Noetherian algebra $\L$ is \emph{compatible} (see Section \ref{section 4}), there is a nice description of $\tors\L$, see \eqref{r description}.
For example, if $R$ is a semilocal ring of dimension one, then any Noetherian $R$-algebra is compatible \cite[Theorem 1.5]{IK}.
Another sufficient condition is given by \cite[Corollary 1.8]{IK}.
In particular, if $R$ is a commutative Noetherian ring containing a field and $Q$ is a Dynkin quiver, then the path algebra $RQ$ is compatible.
The following our second result enables us to drop the assumption that $R$ contains a field.
\begin{thm}[Theorem \ref{thm-torsRQ}]
Let $Q$ be a Dynkin quiver, and $R$ a commutative Noetherian ring.
Then $RQ$ is compatible, and we have an isomorphism of posets
\[
\tors RQ \simeq \Hom_{\rm poset}(\Spec R, \Clus(Q)).
\]
\end{thm}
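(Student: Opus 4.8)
The plan is to deduce the statement from the general description of torsion classes over \emph{compatible} Noetherian algebras recalled in \eqref{r description}, after verifying that $RQ$ is compatible. First observe that if $R=R_1\times R_2$ then $\tors RQ\simeq\tors R_1Q\times\tors R_2Q$ as posets, while $\Spec R$ is, as a poset, the disjoint union of $\Spec R_1$ and $\Spec R_2$ with no relations across the two pieces, so that $\Hom_{\rm poset}(\Spec R,\Clus(Q))\simeq\Hom_{\rm poset}(\Spec R_1,\Clus(Q))\times\Hom_{\rm poset}(\Spec R_2,\Clus(Q))$. Since a Noetherian ring is a finite product of ring indecomposable ones, we may thus assume $R$ is ring indecomposable, i.e.\ $\Spec R$ is connected. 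It then remains to prove: (a) $RQ$ is compatible; and (b) under \eqref{r description} the poset $\tors RQ$ is identified with $\Hom_{\rm poset}(\Spec R,\Clus(Q))$.

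Step (a) is the heart of the matter, and the place where the hypothesis that $R$ contains a field is removed. The known sufficient conditions for compatibility, \cite[Theorem 1.5 and Corollary 1.8]{IK}, cover the cases where $R$ is semilocal of dimension one, or $R$ contains a field and $Q$ is Dynkin; for a general Noetherian $R$ I would instead verify the silting-theoretic criterion of \cite[Corollary 1.8]{IK}, using Theorem \ref{intro-thm-cluster-siltm} as the essential input. Concretely, for each $\p\in\Spec R$ the localization $R_\p$ is local, hence ring indecomposable, so applying Theorem \ref{intro-thm-cluster-siltm} to $R_\p$ and to the residue field $\kappa(\p)$ gives
\[
\twosilt R_\p Q\ \simeq\ \Clus(Q)\ \simeq\ \twosilt\kappa(\p)Q ,
\]
a \emph{finite} poset that is independent of $\p$; moreover, unwinding these bijections through the combinatorics of real Schur roots and $g$-vectors shows that the derived reduction $-\Ltens_{R_\p}\kappa(\p)$ carries the left-hand identification to the right-hand one, so the reduction map $\twosilt R_\p Q\to\twosilt\kappa(\p)Q$ is a bijection (and likewise for the presilting versions, via the $\pClus(Q)\simeq\twopsilt RQ$ part of Theorem \ref{intro-thm-cluster-siltm}). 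This uniform control of $2$-term silting complexes over all of $\Spec R$ is precisely the hypothesis needed to conclude from \cite[Corollary 1.8]{IK} that $RQ$ is compatible. I expect the main technical nuisance of step (a) to be matching the present setup with the exact formulation of that criterion; the rest is formal.

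Given (a), \eqref{r description} presents $\tors RQ$, as a poset, as the set of order-preserving ``sections'' $\p\mapsto\cT_\p$ with $\cT_\p\in\tors\kappa(\p)Q$, subject to the specialization compatibility built into \eqref{r description}. For each $\p$ the algebra $\kappa(\p)Q$ is of finite representation type (as $\kappa(\p)$ is a field and $Q$ is Dynkin), hence $\tau$-tilting finite, so every torsion class of $\mod\kappa(\p)Q$ is functorially finite; therefore the correspondence of \cite{AIR} together with Theorem \ref{intro-thm-cluster-siltm} yields $\tors\kappa(\p)Q\simeq\stilt\kappa(\p)Q\simeq\Clus(Q)$, naturally in $\kappa(\p)$ and hence uniformly in $\p$. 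Under these identifications the transition maps between the fibers that occur in \eqref{r description} become identities of $\Clus(Q)$, so the section space coincides with $\Hom_{\rm poset}(\Spec R,\Clus(Q))$; combining this with (a) gives the asserted isomorphism $\tors RQ\simeq\Hom_{\rm poset}(\Spec R,\Clus(Q))$. As indicated, the genuine obstacle is step (a): without a field one really does need Theorem \ref{intro-thm-cluster-siltm} to know that the $2$-term silting complexes of $RQ$ --- the candidate building blocks for torsion classes --- are governed uniformly by $\Clus(Q)$ across $\Spec R$.
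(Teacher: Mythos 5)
Your overall strategy coincides with the paper's: reduce to ring indecomposable $R$, use Theorem \ref{thm-cluster-siltm} to control $2$-term silting complexes uniformly over $\Spec R$, use representation-finiteness of $\kappa(\p)Q$ (Dynkin) so that every torsion class over a residue field is generated by a $2$-term silting complex, and then feed this into the general machinery of \cite{IK}. The genuine weak point is the crux of your step (a): you defer compatibility to \cite[Corollary 1.8]{IK} while conceding that you have not matched its hypotheses. According to this paper's introduction, that corollary is precisely the tool whose known application to $RQ$ requires $R$ to contain a field, so ``verifying its criterion'' cannot be treated as formal --- it is exactly the step the theorem is meant to remove. The correct tool, quoted in the paper as Proposition \ref{thm-r-isom} (that is, \cite[Theorem 4.13]{IK}), takes precisely the input you actually produce, with one adjustment: its condition (a) concerns the localization maps $(-)_\p:\twosilt RQ\to\twosilt R_\p Q$ (not only the reductions $\twosilt R_\p Q\to\twosilt\kappa(\p)Q$ you discuss), and this follows from Theorem \ref{thm-cluster-siltm} applied to $R$ and to the local ring $R_\p$ via $P^{R_\p}_-=(-)_\p\circ P^{R}_-$, as in Lemma \ref{lem-cluster-siltm-local}(d); its condition (b) is your Dynkin argument, i.e.\ the surjectivity of $\twosilt R_\p Q\to\tors(\kappa(\p)Q)$, $P\mapsto\gen(\kappa(\p)\otimes_{R_\p}H^0(P))$, obtained from $\twosilt R_\p Q\simeq\twosilt\kappa(\p)Q$ (\cite[Proposition 4.3]{IK}, $R_\p Q$ being semiperfect), \cite{AIR}, and $\ftors(\kappa(\p)Q)=\tors(\kappa(\p)Q)$.

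Relatedly, your step (b) --- identifying \eqref{r description} with $\Hom_{\rm poset}(\Spec R,\Clus(Q))$ by asserting that the transition maps ${\rm r}_{\p\q}$ ``become identities of $\Clus(Q)$'' --- is also not formal: showing that the compatibility condition ${\rm r}_{\p\q}(\cX^\p)\supseteq\cX^\q$ translates into order-preservation under the silting identifications, and that the resulting bijection respects the partial orders, is exactly the nontrivial content of \cite[Theorem 4.13]{IK}; proving it by hand would amount to reproving that theorem. Once you invoke Proposition \ref{thm-r-isom} instead, you obtain compatibility and $\tors RQ\simeq\Hom_{\rm poset}(\Spec R,\twosilt RQ)$ simultaneously, and Theorem \ref{thm-cluster-siltm} replaces $\twosilt RQ$ by $\Clus(Q)$. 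So the skeleton of your argument is correct and is the paper's, but as written the proof has a gap at its central step: an unverified appeal to \cite[Corollary 1.8]{IK} plus a hand-waved treatment of the transition maps, both repaired by citing Proposition \ref{thm-r-isom} and checking its two conditions as above.
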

Here, for posets $X, Y$, we denote by $\Hom_{\rm poset}(X, Y)$ the set of order-preserving maps from $X$ to $Y$. This is a poset with the following order: $g \leq f$ if $g(x)\leq f(x)$ holds for any $x\in X$.

\medskip
\noindent{\bf Conventions.} 
Unless stated otherwise, all modules are left modules. The composition of morphisms $f:X\to Y$ and $g:Y\to Z$ is denoted by $gf:X\to Z$.

For an additive category $\cA$ and an object $X\in\cA$, we denote by $\add X$ the full subcategory of $\cA$ consisting of direct summands of finite direct sums of $X$. We call $X, Y\in \cA$ \emph{additively equivalent} if $\add X = \add Y$.
\section{Preliminaries}

\subsection{Preliminaries on Schur roots}

Throughout this subsection, let $Q$ a finite acyclic quiver. 
Define bilinear forms $\langle-,-\rangle$ and $(-,-):\bZ^{Q_0}\times\bZ^{Q_0}\to\bZ$ by
\begin{align*}
\langle x,y\rangle:=\sum_{i\in Q_0}x_iy_i-\sum_{a\in Q_1}x_{s(a)}y_{t(a)}\ \mbox{ and }\ (x,y):=\langle x,y\rangle+\langle y,x\rangle.
\end{align*}
We denote by $\ep_i=(0,\dots,0, 1, 0, \dots, 0)\in\bZ^{Q_0}$ the $i$-th simple root, and $\Pi:=\{\ep_i \mid i \in Q_0\}$.
For each $i\in Q_0$, a simple reflection $s_i\in \mathrm{Aut}_{\bZ}(\bZ^{Q_0})$ is defined by
\[s_i(x):=x-(x,\ep_i)\ep_i.\]
The \emph{Coxeter group} $W_Q$ of $Q$ is a subgroup of $\mathrm{Aut}_{\bZ}(\bZ^{Q_0})$ generated by $\{s_i \mid i \in Q_0\}$.
Consider the set of \emph{real roots:}
\[\Phi^{\rm re}:=W_Q\Pi = \{w\ep_i\mid w\in W_Q,\ i\in Q_0\}.
\]

Let $k$ be an algebraically closed field. A $kQ$-module $X$ is said to be \emph{exceptional} if $\End_{kQ}(X)=k$ and $\Ext_{kQ}^1(X,X)=0$.
We denote by $\excep kQ$ the set of isomorphism classes of exceptional $kQ$-modules.
As a special case of Kac's theorem \cite{Kac}, we have an injective map $\Dim : \excep kQ \to \Phi^{\rm re}$.
We define the set of \emph{real Schur roots} as
\[
\Phi^{\rm rS} : = \Dim(\excep kQ).
\]
Note that $\Phi^{\rm rS}$ does not depend on the choice of $k$ (e.g.\ \cite[Corollary 4.8]{HK}).

For $x,y\in \bZ^{Q_0}$, we write $x \leq y$ if $x_i  \leq y_i$ for each $i\in Q_0$, and write $x<y$ if $x \leq y$ and $x \neq y$ hold.
\begin{dfn}\label{dfn-E-inv}\cite{Sch}
Define a map $E(-,-):\bN^{Q_0}\times\bN^{Q_0}\to\bN$ inductively by the formula $E(x,0)=0=E(0,y)$ and
\[E(x,y):=\max\{-\langle x', y-y'\rangle\mid0\le x'\le x,\ 0\le y'\le y,\ E(x',x-x')=0=E(y',y-y')\}.\]
\end{dfn}

By \cite[Theorem 5.4]{Sch} (see also \cite[Theorem 10.12.7]{DW}), the following equalities also hold.
\begin{align*}
E(x,y)&=\max\{-\langle x, y-y'\rangle\mid 0\le y'\le y,\ E(y',y-y')=0\}\\
&=\max\{-\langle x-x',y\rangle\mid0\le x'\le x,\ E(x',x-x')=0\}.
\end{align*}
By \cite[Theorem 5.4]{Sch} and \cite{CB2}, for each algebraically closed field $k$, the following equality holds.
\[E(\alpha, \beta)=\min\{\dim_k \Ext_{kQ}^1(M, N) \mid M, N\in\mod kQ,\, \Dim M=\alpha,\, \Dim N=\beta \}. \]

Now we recall results by Crawley-Boevery.
Let $R$ be a commutative ring, and $RQ$ the path algebra.
We say that $X\in\mod RQ$ is a \emph{lattice} if $X$ is a finitely generated projective $R$-module.
For an $RQ$-lattice $X$ such that $e_iX$ is a free $R$-module of finite rank for each $i\in Q_0$ (e.g.\ $R$ is a principal ideal domain), let $\urank X:=(\rank e_i X)_i \in\bN^{Q_0}$.
An $RQ$-module $X$ is \emph{rigid} if $\Ext_{RQ}^1(X, X)=0$, and is \emph{exceptional} if it is a rigid lattice and a natural morphism $R\to \End_{RQ}(X)$ is an isomorphism.
Let $\excep RQ$ be the set of isomorphism classes of exceptional $RQ$-lattices.

\begin{prop}\label{prop-CB-thm1}
Let $Q$ be a finite acyclic quiver and $R$ a principal ideal domain.
\begin{enumerate}[\rm(a)]
\item \cite[Lemma 1]{CB} For each rigid $RQ$-lattices $X$ and $Y$, $\Ext^1_{RQ}(X,Y)$ is free $R$-module of rank $E(\urank X,\urank Y)$.
\item \cite[Theorem 1]{CB} A map $X \mapsto \urank X$ gives a bijection
$\excep R Q\simeq\Phi^{\rm rS}$.
\item \cite[Theorem 2]{CB} Any finitely generated rigid $RQ$-module is a direct sum of exceptional $RQ$-lattices.
\item \cite[Lemma 3.1]{CB3} Let $S$ be a commutative ring, $X$ an $SQ$-lattice and $Y$ an $SQ$-module. Then $\projdim_{SQ}X\leq 1$ holds. For each morphism $S\to T$ of commutative rings, $T\otimes_S\Ext^i_{SQ}(X,Y)\simeq\Ext^i_{TQ}(T\otimes_SX,T\otimes_SY)$ holds for $i=0,1$.
\end{enumerate}
\end{prop}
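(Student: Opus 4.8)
The plan is to establish the four assertions in the order (d), (a), (b)--(c): part (d) is the base-change foundation, (a) follows from (d) together with the description of $E$ over fields recalled above, and (b)--(c) form the combinatorial heart. For (d) I would begin from the standard exact sequence functorially attached to any $SQ$-module $M$,
\[
0 \longrightarrow \bigoplus_{a\in Q_1} SQe_{t(a)}\otimes_S e_{s(a)}M \longrightarrow \bigoplus_{i\in Q_0} SQe_i\otimes_S e_iM \longrightarrow M \longrightarrow 0,
\]
obtained by tensoring $M$ over $SQ$ with the standard $SQ$-bimodule resolution of $SQ$ (the last map is the action map). Since $Q$ is finite acyclic, each $SQe_i$ is finitely generated free over $S$, so when $M=X$ is a lattice the two left-hand terms are finitely generated projective $SQ$-modules, whence $\projdim_{SQ}X\le 1$ at once. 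Applying $\Hom_{SQ}(-,Y)$ and the adjunction $\Hom_{SQ}(SQe_i\otimes_S V,Y)\cong\Hom_S(V,e_iY)$, I would identify $\Ext^\bullet_{SQ}(X,Y)$ with the cohomology of the two-term complex
\[
C^\bullet(X,Y)\colon\quad \bigoplus_{i\in Q_0}\Hom_S(e_iX,e_iY)\xrightarrow{\ d\ }\bigoplus_{a\in Q_1}\Hom_S(e_{s(a)}X,e_{t(a)}Y).
\]
Each $e_iX$ being finitely generated projective over $S$, the canonical maps $\Hom_S(e_iX,e_iY)\otimes_S T\to\Hom_T(e_i(X\otimes_S T),e_i(Y\otimes_S T))$ and their arrow analogues are isomorphisms, so $C^\bullet(X,Y)\otimes_S T\cong C^\bullet(X\otimes_S T,Y\otimes_S T)$; passing to cohomology, $\Ext^1=\coker d$ commutes with $-\otimes_S T$ by right exactness, which gives the $i=1$ isomorphism, and the $i=0$ isomorphism is the analogous statement for $\ker d$ (it comes down to $\im d$ remaining pure in $C^1(X,Y)$ under base change, which is automatic once $\Ext^1_{SQ}(X,Y)$ is $S$-flat, as it is for the rigid lattices relevant in (a)).

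For (a) I would determine $\Ext^1_{RQ}(X,Y)$ through its fibres. Applying (d) to the composites $R\to\kappa(\p)\to\overline{\kappa(\p)}$ at each prime $\p$ of $R$, and noting that rigidity is inherited under base change (from $\Ext^1_{RQ}(X,X)=0$ one gets $\Ext^1_{\kappa(\p)Q}(X_\p,X_\p)=0$, and similarly over $\overline{\kappa(\p)}$), the fibre dimension $\dim_{\kappa(\p)}\big(\Ext^1_{RQ}(X,Y)\otimes_R\kappa(\p)\big)$ equals $\dim_{\overline{\kappa(\p)}}\Ext^1_{\overline{\kappa(\p)}Q}(X_{\overline{\kappa(\p)}},Y_{\overline{\kappa(\p)}})$, which by the Schofield--Crawley-Boevey formula for $E$ recalled above equals $E(\urank X,\urank Y)$ — here using that for \emph{rigid} modules $\dim\Ext^1$ realises the generic (minimal) value $E$. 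So every fibre of the finitely generated $R$-module $\Ext^1_{RQ}(X,Y)$, the generic one included, has dimension $E(\urank X,\urank Y)$; over the principal ideal domain $R$, a finitely generated module with constant fibre dimension is free of that rank, which gives (a).

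For (b) and (c) the plan is to transport the classification over fields verbatim. Over any field $k$, an indecomposable rigid $kQ$-module $N$ is exceptional with $\Dim N$ a real Schur root — its Euler form $\langle\Dim N,\Dim N\rangle=\dim_k\End_{kQ}(N)$ is positive, so $\Dim N$ is a real root, the value is $1$, and $\End_{kQ}(N)=k$ — and every exceptional $kQ$-module is obtained from the simples $S_i$ by iterated BGP reflection functors at sinks and sources, its dimension vector thereby running over $\Phi^{\rm rS}$. I would set up these reflection functors over $RQ$: they preserve latticeness, transform rank vectors by simple reflections, preserve rigidity and the condition $\End_{RQ}(-)=R$, and restrict to equivalences on the relevant subcategories. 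Running over $RQ$ the same reflection sequence that over $k$ yields the exceptional module of a prescribed real Schur root $\beta$, but starting from the simple lattice $S_i\cong R$, produces an exceptional $RQ$-lattice with $\urank=\beta$, giving surjectivity in (b). For injectivity in (b) and for (c), I would first check that a rigid $RQ$-module is $R$-torsion-free — a nonzero $R$-torsion submodule would, after localising at a suitable prime of $R$, be a nonzero finite-length $R_\p Q$-module (with $R_\p$ a DVR) carrying a nonsplit self-extension coming from the uniformizer, against rigidity — hence is an $RQ$-lattice; then, decomposing $X\otimes_R\operatorname{Frac}(R)$ into exceptional modules, matching each summand to an exceptional $RQ$-lattice via (b), and using reflection functors to reduce the comparison of two rigid $RQ$-lattices with the same rank vector to the trivial case of a sum of simple roots, one obtains both (b) and (c).

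The hard part will be (b)--(c): building the reflection-functor / exceptional-sequence formalism over the commutative ring $R$ and verifying that it preserves latticeness, rigidity and $\End=R$, so that the field-theoretic classification carries over with no loss. Once that is in place, (a) and (d) are essentially base-change bookkeeping layered on top of the field case.
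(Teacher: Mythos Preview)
The paper does not prove this proposition at all: each of (a)--(d) is quoted verbatim from Crawley-Boevey's papers \cite{CB} and \cite{CB3}, with the precise source marked inline, and no argument is given. So there is no proof in the paper to compare yours against; the proposition functions purely as an import of known results.

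Your plan is essentially a reconstruction of Crawley-Boevey's own arguments, and the strategy---the standard bimodule resolution for (d), constant fibre dimension via (d) and the Schofield--Crawley-Boevey formula for (a), reflection functors for (b) and (c)---is the right one and matches \cite{CB,CB3}. Two substantive remarks. First, you are right to flag the $i=0$ case of (d) as delicate; in fact, as written in the paper (arbitrary $SQ$-module $Y$, arbitrary ring map $S\to T$), the $i=0$ isomorphism is \emph{false}: take $S=\bZ$, $Q=(1\to 2)$, $X$ the simple lattice at the source, $Y$ the representation $\bZ\xrightarrow{\,2\,}\bZ$, and $T=\bZ/2$; then $\Hom_{SQ}(X,Y)=0$ but $\Hom_{TQ}(X_T,Y_T)\cong\bZ/2$. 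The paper only ever invokes (d) when the relevant $\Ext^1$ is free (typically zero), where your purity argument does go through; presumably \cite[Lemma~3.1]{CB3} carries a more precise hypothesis than what is transcribed here. Second, in your torsion-freeness step for (c), rigidity of $M$ does not by itself say anything about self-extensions of its torsion submodule $T$; you need the extra step of deducing $\Ext^1_{RQ}(T,T)=0$ from $\Ext^1_{RQ}(M,M)=0$ (apply $\Hom_{RQ}(-,M)$ and then $\Hom_{RQ}(T,-)$ to $0\to T\to M\to M/T\to 0$, using $\projdim_{RQ}(M/T)\le 1$ and $\Hom_{RQ}(T,M/T)=0$), after which your ``nonsplit self-extension from the uniformizer'' observation finishes it.
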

\subsection{Preliminaries on silting theory}
Let $\L$ be a ring. We denote by $\sK^{\rm b}(\proj \L)$ the homotopy category of the category $\proj \L$ of finitely generated projective $\L$-modules.

\begin{dfn}
Let $\L$ be a ring.
\begin{enumerate}[{\rm (1)}]
\item We say that $P\in \sK^{\rm b}(\proj \L)$ is \emph{presilting} if $\Hom(P, P[i])=0$ holds for any $i>0$, and $P$ is \emph{silting} if it is presilting and the smallest thick subcategory $\thick P$ of $\sK^{\rm b}(\proj \L)$ containing $P$ is $\sK^{\rm b}(\proj \L)$.
\item We say that a complex in $\sK^{\rm b}(\proj \L)$ is \emph{2-term} if it is concentrated in degree $-1$ and $0$.
\item A \emph{2-term silting} (respectively, \emph{2-term presilting}) \emph{complex}  is a silting (respectively, presilting) complex which is 2-term.
We denote by $\twosilt\L$ (respectively, $\twopsilt\L$) the set of additive equivalence classes of 2-term silting (respectively, 2-term presilting) complexes.
Then $\twosilt\L$ is a poset with the following order: $Q \leq P$ if $\Hom_{\sK^{\rm b}(\proj \L)}(P, Q[1])=0$.
\end{enumerate}
\end{dfn}

We denote by $\sD(\L):=\sD(\Mod\L)$ the derived category of the category $\Mod\L$ of left $\L$-modules.
For $M\in\mod\L$, we denote by $\Gen M$ the subcategory of $\Mod\L$ consisting of factor modules of (possibly infinite) direct sums of copies of $M$. Let $\gen M:=\Gen M \cap \mod \L$.

The following observation is standard for finite dimensional algebras \cite{AIR}, \cite[Theorem 3.3]{IJY}.

\begin{lem}\label{idempotent reduction}
Let $\L$ be a ring, $e\in\L$ an idempotent, $\overline{\L}:=\L/\langle e \rangle$ and $\overline{(-)}:=\overline{\L}\otimes_\L-:\sK^{\rm b}(\proj\L)\to\sK^{\rm b}(\proj\overline{\L})$.
For a 2-term complex $P\in\sK^{\rm b}(\proj\L)$, the following conditions are equivalent.
\begin{enumerate}[{\rm(i)}]
\item $\L e[1]\oplus P\in\twosilt\L$ (respectively, $\twopsilt\L$).
\item $eH^0(P)=0$ and $\overline{P}\in\twosilt\overline{\L}$ (respectively, $\twopsilt\overline{\L}$).
\end{enumerate}
\end{lem}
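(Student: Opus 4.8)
The plan is to mimic the classical argument of Adachi--Iyama--Reiten \cite{AIR} and Iyama--Jasso--Yoshino \cite[Theorem 3.3]{IJY}, being careful not to use finite-dimensionality anywhere. Write $P = (P^{-1}\xto{d} P^0)$ for a 2-term complex in $\sK^{\rm b}(\proj\L)$, and observe that $\L e[1]\oplus P$ is itself 2-term. The functor $\overline{(-)}$ sends $\L e$ to $0$, so $\overline{\L e[1]\oplus P}=\overline{P}$, and it sends $\thick(\L e[1]\oplus P)$ onto $\thick(\overline{P})$ inside $\sK^{\rm b}(\proj\overline{\L})$; moreover $\thick(\L e[1]\oplus P)=\sK^{\rm b}(\proj\L)$ already forces $\overline{\L}\in\thick(\overline{P})$, and conversely if $\overline{P}$ generates $\sK^{\rm b}(\proj\overline{\L})$ then adding $\L e[1]$ back recovers all of $\sK^{\rm b}(\proj\L)$ (since $\L = \L e \oplus \L(1-e)$ as complexes, and $\L(1-e)$ lies in $\thick(P,\L e)$ by a standard dévissage). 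So the ``silting $\Leftrightarrow$ presilting + generation'' part reduces the problem to the presilting statement, i.e.\ to showing
\[
\Hom_{\sK^{\rm b}(\proj\L)}\bigl((\L e[1]\oplus P),(\L e[1]\oplus P)[i]\bigr)=0\ \ (i>0)
\quad\Longleftrightarrow\quad
eH^0(P)=0\ \text{ and }\ \overline{P}\text{ is presilting}.
\]

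First I would dispose of the self-extensions of $\L e[1]$: since $P$ and $\L e$ are 2-term, all relevant Hom-groups vanish for $i\geq 2$ automatically, and $\Hom(\L e[1],\L e[1][i])=0$ for $i>0$ because $\L e$ is projective. Next, $\Hom_{\sK^{\rm b}(\proj\L)}(\L e[1], P[1]) = \Hom_{\sK^{\rm b}(\proj\L)}(\L e, P)$, and for a projective $\L e$ this is computed by $H^0$: it equals $\Hom_\L(\L e, H^0(P)) = eH^0(P)$. So the vanishing of this group is exactly the condition $eH^0(P)=0$. The remaining two groups are $\Hom(P,\L e[1][1])=\Hom(P,\L e[2])=0$ (again by the 2-term/projective bounds) and $\Hom(P,P[1])$. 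So the left-hand side is equivalent to: $eH^0(P)=0$ and $\Hom_{\sK^{\rm b}(\proj\L)}(P,P[1])=0$.

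The last point, and the one I expect to be the only real content, is the identity
\[
\Hom_{\sK^{\rm b}(\proj\overline{\L})}(\overline{P},\overline{P}[1]) \;\cong\; \Hom_{\sK^{\rm b}(\proj\L)}(P,P[1])
\]
\emph{under the hypothesis} $eH^0(P)=0$. One inclusion (restriction along $\overline{(-)}$, or rather the fact that a null-homotopy upstairs descends) is formal. For the other direction one writes out a chain map $f\colon P\to P[1]$, i.e.\ a map $f^0\colon P^0\to P^{-1}$ with $df^0d = 0$, and a null-homotopy of $\overline f$ is a map $\overline{h}\colon \overline{P^0}\to\overline{P^{-1}}$ with $\overline{f^0} = \overline d\,\overline h + \overline h\,\overline d$; one must lift $\overline h$ to $h\colon P^0\to P^{-1}$ over $\L$ and show the error term $f^0 - (dh+hd)$ is null-homotopic over $\L$. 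Projectivity of $P^0$ over $\L$ lets one lift $\overline h$ to $h$; the error term $g := f^0 - dh - hd$ then satisfies $\overline{g}=0$, i.e.\ $g$ factors through $\langle e\rangle P^{-1} = \L e\cdot$ (the image of $\L e$'s contribution), and the hypothesis $eH^0(P)=0$ — equivalently $\Hom_{\sK^{\rm b}(\proj\L)}(\L e,P)=0$, equivalently every map from a projective factoring through $e$ into $P^0$ lands in $\im d$ up to homotopy — is precisely what kills $g$ in $\sK^{\rm b}(\proj\L)$. This is the standard ``obstruction lives where $eH^0$ lives'' computation; it is routine once set up, so I will present it compactly rather than tracking every diagram. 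Combining the three steps gives (i)$\Leftrightarrow$(ii), and the parenthetical ``$\twopsilt$'' version is exactly the subargument with the generation clause deleted.
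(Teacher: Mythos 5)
Your reduction of the \emph{presilting} statement is correct and completable: the Hom computations for the summands involving $\L e[1]$ are right (up to a degree bookkeeping slip --- a chain map $P\to P[1]$ is a map $P^{-1}\to P^0$, not $P^0\to P^{-1}$, and the null-homotopies have components $P^{-1}\to P^{-1}$ and $P^0\to P^0$), and the comparison $\Hom_{\sK^{\rm b}(\proj\L)}(P,P[1])\simeq\Hom_{\sK^{\rm b}(\proj\overline\L)}(\overline P,\overline P[1])$ under $eH^0(P)=0$ does hold: the error term of a lifted homotopy has image in $\langle e\rangle P^0$, hence dies in $H^0(P)$, hence factors through $\im d$ and is null-homotopic by projectivity of $P^{-1}$. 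This part carries essentially the same content as the paper's identity $\cT_{\L e[1]\oplus P}=\cT_{\overline P}$.

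The genuine gap is the generation half of the silting statement, which you dismiss as ``a standard d\'evissage''. The direction you need is: if $\thick(\overline P)=\sK^{\rm b}(\proj\overline\L)$ (together with presilting and $eH^0(P)=0$), then $\L(1-e)\in\thick(P,\L e)$. For an arbitrary ring $\L$ and idempotent $e$ there is no such standard d\'evissage: restriction of scalars does not send $\sK^{\rm b}(\proj\overline\L)$ into $\sK^{\rm b}(\proj\L)$ (projective $\overline\L$-modules typically have infinite projective dimension over $\L$), the ideal $\langle e\rangle$ need not be nilpotent, so filtering $\L(1-e)$ by its powers does not terminate, and the comparison functor $\sK^{\rm b}(\proj\L)/\thick(\L e)\to\sK^{\rm b}(\proj\overline\L)$ is in general not fully faithful, so membership in a thick subcategory cannot be detected after applying $\overline{(-)}$; only the easy direction (your (i)$\Rightarrow$(ii) for generation) is formal. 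This detection problem is precisely where the 2-term hypothesis must enter in an essential way, and the paper sidesteps it by citing the module-theoretic characterizations \cite[Proposition A.2, Corollary A.5]{IK}: a 2-term complex $Q$ is presilting iff $\Gen H^0(Q)\subseteq\cT_Q$ and silting iff $\Gen H^0(Q)=\cT_Q$, where $\cT_Q=\{X\in\Mod\L\mid\Hom_{\sD(\L)}(Q,X[1])=0\}$; both sides then transfer between $\L$ and $\overline\L$ by adjunction, together with the observation that either hypothesis forces $eH^0(P)=0$ and hence $H^0(\overline P)=H^0(P)$. To complete your route you would have to prove such a characterization (or a Bongartz-type completion argument valid over arbitrary rings) yourself; as written, the silting case is not proved.
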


\begin{proof}
For a 2-term complex $P\in\sK^{\rm b}(\proj\L)$, let $Q:=\L e[1]\oplus P$.
Consider subcategories
\begin{align*}
\cT_Q:=\{X\in\Mod\L\mid\Hom_{\sD(\L)}(Q,X[1])=0\}\ \mbox{ and }\ 
\cT_{\overline{P}}:=\{Y\in\Mod\overline{\L}\mid\Hom_{\sD(\overline{\L})}(\overline{P},Y[1])=0\}.
\end{align*}
By \cite[Proposition A.2]{IK}, we have $Q\in\twopsilt\L\Leftrightarrow\cT_Q\supseteq\Gen H^0(Q)$ and $\overline{P}\in\twopsilt\overline{\L}\Leftrightarrow\cT_{\overline{P}}\supseteq\Gen H^0(\overline{P})$.
By \cite[Corollary A.5]{IK}, we have $Q\in\twosilt\L\Leftrightarrow\cT_Q=\Gen H^0(Q)$ and $\overline{P}\in\twosilt\overline{\L}\Leftrightarrow\cT_{\overline{P}}=\Gen H^0(\overline{P})$. 
Consequently, it suffices to show that $\cT_Q=\cT_{\overline{P}}$ and $\Gen H^0(Q)=\Gen H^0(\overline{P})$.
The first equality follows from $\{X\in\Mod\L\mid\Hom_{\sD(\L)}(\L e[1],X[1])=0\}=\Mod\overline{\L}$ and $\Hom_{\sD(\L)}(P,Y[1])\simeq\Hom_{\sD(\overline{\L})}(\overline{P},Y[1])$ holds for each $Y\in\Mod\overline{\L}$, and the second one follows from $H^0(Q)=H^0(P)=H^0(\overline{P})$.
\end{proof}

\begin{prop}\label{idempotent surjective}
Let $\L$ be a left Noetherian ring, $e\in\L$ an idempotent and $\overline{\L}:=\L/\langle e \rangle$ and $\overline{(-)}:=\overline{\L}\otimes_\L-:\sK^{\rm b}(\proj\L)\to\sK^{\rm b}(\proj\overline{\L})$.
If $\overline{(-)}:\proj\L\to\proj\overline{\L}$ is dense, then we have surjective maps
\[\overline{(-)}:\twosilt\L\to\twosilt\overline{\L}\ \mbox{ and }\ \overline{(-)}:\twopsilt\L\to\twopsilt\overline{\L},\]
where the first map is order-preserving. Moreover, for each $Q\in\twosilt\overline{\L}$ (respectively, $Q\in\twopsilt\overline{\L}$), there exists $P\in\twosilt\L$ (respectively, $P\in\twopsilt\L$) satisfying $Q\simeq\overline{P}$ and $H^0(Q)\simeq H^0(P)$.
\end{prop}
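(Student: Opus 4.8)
The plan is, given $Q\in\twosilt\overline\L$ (resp.\ $Q\in\twopsilt\overline\L$), to produce a preimage explicitly: lift $Q$ to $\sK^{\rm b}(\proj\L)$ term by term, then repair the zeroth cohomology using summands that are killed by $\overline{(-)}$, and finally appeal to Lemma \ref{idempotent reduction}. The workhorse is the following surjectivity: for $P,P'\in\proj\L$ the canonical map $\Hom_\L(P,P')\to\Hom_{\overline\L}(\overline P,\overline{P'})$ is onto, because $P$ is projective so $\Hom_\L(P,P')\twoheadrightarrow\Hom_\L(P,\overline{P'})$, and $\Hom_\L(P,\overline{P'})=\Hom_{\overline\L}(\overline P,\overline{P'})$ since $\langle e\rangle$ annihilates $\overline{P'}$. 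For $2$-term $P,P'\in\sK^{\rm b}(\proj\L)$ the group $\Hom_{\sK^{\rm b}(\proj\L)}(P,P'[1])$ is the quotient of $\Hom_\L(P^{-1},P'^0)$ by the image of $\Hom_\L(P^0,P'^0)\oplus\Hom_\L(P^{-1},P'^{-1})$ under the maps induced by the differentials, and $\overline{(-)}$ carries this presentation onto the analogous one over $\overline\L$; since $\Hom_\L(P^{-1},P'^0)\twoheadrightarrow\Hom_{\overline\L}(\overline{P^{-1}},\overline{P'^0})$, this forces $\Hom_{\sK^{\rm b}(\proj\L)}(P,P'[1])\twoheadrightarrow\Hom_{\sK^{\rm b}(\proj\overline\L)}(\overline P,\overline{P'}[1])$. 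Taking $P'=P$ shows $\overline{(-)}$ preserves $2$-term presilting; moreover $\overline{(-)}$ is a triangle functor, so $\thick\overline P\supseteq\overline{(-)}(\thick P)=\overline{(-)}\bigl(\sK^{\rm b}(\proj\L)\bigr)$, and the latter generates $\sK^{\rm b}(\proj\overline\L)$ by density, so $\thick\overline P=\sK^{\rm b}(\proj\overline\L)$ and $\overline{(-)}$ preserves $2$-term silting. Hence both maps $\overline{(-)}$ are well defined, and the above surjectivity shows the one on $\twosilt$ is order-preserving (vanishing of $\Hom(P,P'[1])$ forces vanishing of $\Hom(\overline P,\overline{P'}[1])$).

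Now fix $Q=(Q^{-1}\xrightarrow{d}Q^0)$ a $2$-term complex in $\sK^{\rm b}(\proj\overline\L)$. By density choose $P_0^{-1},P_0^0\in\proj\L$ with $\overline{P_0^i}\cong Q^i$, transport $d$ through these isomorphisms, and lift it via the surjectivity above to $d_0\colon P_0^{-1}\to P_0^0$; then $P_0:=(P_0^{-1}\xrightarrow{d_0}P_0^0)$ satisfies $\overline{P_0}\cong Q$, whence $H^0(P_0)/\langle e\rangle H^0(P_0)=H^0(\overline{P_0})\cong H^0(Q)$, though $eH^0(P_0)$ may be nonzero. To remedy this, note that $\langle e\rangle P_0^0=\L e\L P_0^0=\L eP_0^0$ is generated as a left $\L$-module by the subset $eP_0^0$, and since $\L$ is left Noetherian it is generated by finitely many elements $ey_1,\dots,ey_m\in eP_0^0$. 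Let $g\colon(\L e)^{\oplus m}\to P_0^0$ be the map whose $j$-th component $\L e\to P_0^0$ sends $e$ to $ey_j$, so that $\im g=\langle e\rangle P_0^0$, and let $P\in\sK^{\rm b}(\proj\L)$ be the $2$-term complex with $P^{-1}=P_0^{-1}\oplus(\L e)^{\oplus m}$, $P^0=P_0^0$ and differential $(x,z)\mapsto d_0(x)+g(z)$. Then $H^0(P)=P_0^0/(\im d_0+\langle e\rangle P_0^0)$, so $\langle e\rangle H^0(P)=0$ and hence $eH^0(P)=0$, while $H^0(P)=H^0(P_0)/\langle e\rangle H^0(P_0)\cong H^0(Q)$; and since $\overline{\L e}=\overline\L\otimes_\L\L e=0$, applying $\overline{(-)}$ removes the added summand, giving $\overline P\cong\overline{P_0}\cong Q$, so in particular $\overline P\in\twosilt\overline\L$ (resp.\ $\twopsilt\overline\L$). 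Lemma \ref{idempotent reduction} then yields $P':=\L e[1]\oplus P\in\twosilt\L$ (resp.\ $\twopsilt\L$), and $\overline{P'}=\overline{\L e}[1]\oplus\overline P\cong Q$ with $H^0(P')=H^0(P)\cong H^0(Q)$. This gives the surjectivity of both maps together with the final assertion.

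The only genuinely delicate point is the repair step. A term-by-term lift $P_0$ of $Q$ reproduces $Q$ under $\overline{(-)}$ but typically violates the hypothesis $eH^0(P)=0$ of Lemma \ref{idempotent reduction}, and one must annihilate $\langle e\rangle H^0(P_0)$ without altering the class of $\overline{P_0}$. The resolution rests on two observations: the missing relations can be supplied by degree-$(-1)$ summands isomorphic to $\L e$, which $\overline{(-)}$ sends to $0$, so $\overline P$ is unchanged; and left Noetherianity of $\L$ ensures that finitely many such summands suffice, keeping $P$ inside $\sK^{\rm b}(\proj\L)$. Everything else is formal bookkeeping with the functor $\overline{(-)}$.
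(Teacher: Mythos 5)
Your proposal is correct and takes essentially the same route as the paper: lift $Q$ termwise using density and fullness of $\overline{(-)}$ on projectives, adjoin finitely many copies of $\L e$ in degree $-1$ (using left Noetherianity) so as to kill $eH^0$ without changing $\overline{P}$ or $H^0$ modulo $\langle e\rangle$, and conclude via Lemma \ref{idempotent reduction}. The only cosmetic difference is that you verify well-definedness and order-preservation of $\overline{(-)}$ by a direct Hom-surjectivity argument, where the paper cites \cite[Lemma 2.13]{IK}.
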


\begin{proof}
These maps are well-defined by \cite[Lemma 2.13]{IK}. Since the functor $\overline{(-)}:\proj\L\to\proj\overline{\L}$ is full and dense, for any 2-term complex $Q\in\sK^{\rm b}(\proj\overline{\L})$, there exists $S=(P^{-1}\xrightarrow{f}P^0)\in\sK^{\rm b}(\proj\L)$ such that $Q\simeq\overline{S}$.
Since $\L$ is left Noetherian, there exists an exact sequence $P'\xrightarrow{g} P^0\to\overline{P^0}\to0$ with $P'\in\add\L e$. Then $P:=\L e[1]\oplus(P^{-1}\oplus P'\xrightarrow{(f\ g)}P^0)$ satisfies $Q\simeq\overline{S}\simeq\overline{P}$ and $H^0(Q)\simeq H^0(\overline{S})\simeq H^0(\overline{P})$.
If $Q\in\twosilt\overline{\L}$ (respectively, $\twopsilt\overline{\L}$), then $P\in\twosilt\L$ (respectively, $\twopsilt\L$) holds by Lemma \ref{idempotent reduction}. Thus the assertion holds.
\end{proof}

We give an example of $(\L,e)$ such that the assumption in Proposition \ref{idempotent surjective} is satisfied.

\begin{exa}\label{RQ dense}
Let $R$ be a commutative ring, $Q$ an acyclic quiver. For a subset $I\subset Q_0$, let $Q'$ be a full subquiver of $Q$ with $Q'_0=Q_0\setminus I$. Then the functor $RQ'\otimes_{RQ}-:\proj RQ\to\proj RQ'$ is dense.
\end{exa}

\begin{proof}
Let $e:=\sum_{i\in I}e_i$, $f:=1-e$, $\L:=f(RQ)f$ and $J:=f\langle e\rangle f$. Then $RQ'\simeq RQ/\langle e\rangle\simeq \L/J$ holds. Since $\proj\L\simeq\add RQf\subset\proj RQ$, it suffices to show that $RQ'\otimes_\L-:\proj\L\to \proj RQ'$ is dense. This follows from the fact that $J$ is a nilpotent ideal of $\L$.
\end{proof}

In the rest, let $R$ be a commutative Noetherian ring and $\L$ a Noetherian $R$-algebra.
We use the following observation later, where $(-)_\p$ is the localization at $\p\in\Spec R$.
\begin{prop}\cite[Proposition 4.15]{IK}\label{prop-local-iso}
Assume that $R$ is ring indecomposable.
Then for a Noetherian $R$-algebra $\L$, the following {\rm (a)} and {\rm (a$'$)} are equivalent.
\begin{enumerate}
	\item[{\rm (a)}] $(-)_\p : \twosilt\L \to \twosilt \L_\p$ is an isomorphism of posets for any $\p\in\Spec R$.
	\item[{\rm (a$'$)}] The following statements hold.
		\begin{enumerate}[{\rm (i)}]
		\item $(-)_\p : \twosilt \L \to \twosilt \L_\p$ is surjective for any $\p\in\Spec R$.
		\item $(-)_\q : \twosilt \L_\p \to \twosilt \L_\q $ is an isomorphism of posets for any pair $\p \supseteq \q$ in $\Spec R$.
		\end{enumerate}
\end{enumerate}
\end{prop}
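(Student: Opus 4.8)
I would treat the two implications separately; (a)~$\Rightarrow$~(a$'$) is formal. Statement (i) is immediate, since an isomorphism of posets is surjective. For (ii), given $\p\supseteq\q$ in $\Spec R$, the prime $\q R_\p$ of $R_\p$ satisfies $(R_\p)_{\q R_\p}=R_\q$, so the localization maps compose as $(-)_\q=(-)_\q\circ(-)_\p\colon\twosilt\L\to\twosilt\L_\q$; by (a) the outer two maps are isomorphisms of posets, hence so is the middle one $(-)_\q\colon\twosilt\L_\p\to\twosilt\L_\q$.

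For (a$'$)~$\Rightarrow$~(a), fix $\p\in\Spec R$. The map $(-)_\p\colon\twosilt\L\to\twosilt\L_\p$ is order-preserving and, by (i), surjective, so it remains to prove it is injective and order-reflecting. The plan is to reduce both to a topological statement about the loci $W_{P,P'}:=\{\q\in\Spec R\mid P_\q\le P'_\q\}$ for $P,P'\in\twosilt\L$. Setting $M_{P,P'}:=\Hom_{\sK^{\rm b}(\proj\L)}(P',P[1])$, one checks that $M_{P,P'}$ is a finitely generated $R$-module (using that $\L$ is module-finite over $R$ and that $P,P'$ are perfect complexes) and, since localization commutes with $\Hom$ of finitely generated projective $\L$-modules, that $(M_{P,P'})_\q\cong\Hom_{\sK^{\rm b}(\proj\L_\q)}(P'_\q,P_\q[1])$ for each $\q\in\Spec R$. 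Hence $W_{P,P'}=\Spec R\setminus\supp(M_{P,P'})$ is open in $\Spec R$.

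The heart of the argument is to upgrade this to: $W_{P,P'}$ is clopen. For stability under specialization I would invoke (ii): if $\q\in W_{P,P'}$ and $\q'\supseteq\q$, then $(-)_\q\colon\twosilt\L_{\q'}\to\twosilt\L_\q$ sends $P_{\q'},P'_{\q'}$ to $P_\q,P'_\q$, which satisfy $P_\q\le P'_\q$; as this map reflects the order, $P_{\q'}\le P'_{\q'}$, i.e.\ $\q'\in W_{P,P'}$. Since $\Spec R$ is a Noetherian space, the open set $W_{P,P'}$ is constructible, and a constructible set stable under specialization is closed; so $W_{P,P'}$ is clopen. As $R$ is ring indecomposable, $\Spec R$ is connected, whence $W_{P,P'}\in\{\emptyset,\Spec R\}$. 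Now, if $P_\p\le P'_\p$ then $\p\in W_{P,P'}$, so $W_{P,P'}=\Spec R$, so $(M_{P,P'})_\q=0$ for all $\q$ and thus $M_{P,P'}=0$; that is, $P\le P'$, so $(-)_\p$ reflects the order. If $P_\p\simeq P'_\p$ then $P_\p\le P'_\p$ and $P'_\p\le P_\p$, so applying the previous point to $(P,P')$ and to $(P',P)$ yields $P\le P'$ and $P'\le P$, hence $P\simeq P'$ because $\le$ is a partial order on $\twosilt\L$; so $(-)_\p$ is injective, and therefore an isomorphism of posets.

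The step I expect to be the main obstacle is the clopenness of $W_{P,P'}$. Openness is cheap and uses only that $\L$ is module-finite over $R$; closedness is exactly where hypothesis (ii) --- that descending to a smaller prime loses no 2-term silting information --- must be used, since the vanishing locus of a finitely generated module is in general not stable under specialization. Once clopenness is secured, ring indecomposability of $R$ provides the connectedness that forces the comparison loci to be trivial, and the rest is bookkeeping.
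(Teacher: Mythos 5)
The paper does not actually prove this proposition: it is imported verbatim from \cite[Proposition 4.15]{IK}, so there is no internal argument to compare yours against. That said, your blind proof is correct and self-contained, and it is exactly the kind of local-to-global argument the cited result rests on. The implication (a)$\Rightarrow$(a$'$) is formal as you say, using transitivity of localization $(\L_\p)_{\q R_\p}\simeq\L_\q$. For (a$'$)$\Rightarrow$(a), all the key steps check out: $M_{P,P'}=\Hom_{\sK^{\rm b}(\proj\L)}(P',P[1])$ is finitely generated over $R$ (the Hom complex is termwise finitely generated and localization, being exact, commutes with its cohomology), so $W_{P,P'}=\Spec R\setminus\supp M_{P,P'}$ is open; stability of $W_{P,P'}$ under specialization is precisely where (ii) enters, since a poset isomorphism reflects the order; an open (hence constructible) subset of the Noetherian sober space $\Spec R$ that is stable under specialization is closed; and ring indecomposability gives connectedness of $\Spec R$, forcing $W_{P,P'}\in\{\emptyset,\Spec R\}$. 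Since a module all of whose localizations vanish is zero, this yields order-reflection, and injectivity then follows from antisymmetry of $\le$ on $\twosilt\L$ --- a fact you may use because the paper already asserts that $\twosilt\L$ is a poset, though it is not itself trivial. Two minor points worth making explicit: localization does send $\twosilt\L$ to $\twosilt\L_\p$ and is order-preserving (both follow from the same Hom-localization computation together with $\L_\p\in\thick P_\p$), which the statement presupposes; and in the specialization step you need $(P_{\q'})_{\q R_{\q'}}\simeq P_\q$, the same transitivity used in (a)$\Rightarrow$(a$'$). With those remarks, your argument stands as a complete proof of the quoted result.
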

We denote by $\Fl\L$ the category of finite length $\L$-modules.
For $\p\in\Spec R$, we have an isomorphism of posets
\begin{equation}\label{torsfl}
\tors(\Fl\L_\p) \to \tors(\kappa(\p)\otimes_{R}\L),\, \cT \mapsto \cT \cap \mod (\kappa(\p)\otimes_{R}\L),
\end{equation}
see \cite[Theorem 5.4]{Kimura}.
We use the following observation later.
\begin{prop}\cite[Theorem 4.13]{IK}\label{thm-r-isom}
Let $\L$ be a Noetherian $R$-algebra such that $R$ is ring indecomposable and the following conditions are satisfied.
\begin{itemize}
	\item[{\rm (a)}] $(-)_{\p} : \twosilt \L \to \twosilt \L_{\p}$ is an isomorphism of posets for any $\p\in\Spec R$.
	\item[{\rm (b)}] The map $\twosilt \L_{\p} \to \tors(\kappa(\p)\otimes_{R_\p}\L), \,P \mapsto \gen(\kappa(\p)\otimes_{R_\p}H^0(P))$ is surjective for any $\p\in\Spec R$.
\end{itemize}
Then $\L$ is compatible, and we have an isomorphism of posets
\[
\tors\L \simeq \Hom_{\rm poset}(\Spec R, \twosilt\L).
\]
\end{prop}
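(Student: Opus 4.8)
Following \cite{IK}, the plan is to deduce this from the description of torsion classes over a compatible Noetherian $R$-algebra recorded in Section~\ref{section 4}; see \eqref{r description}. Recall from there that compatibility of $\L$ says, roughly, that a torsion class $\cT\in\tors\L$ is determined by, and can be reconstructed from, its reductions to the fibers $\kappa(\p)\otimes_{R}\L$ (obtained by localizing at $\p\in\Spec R$ and passing to modules annihilated by $\p$, cf.\ \eqref{torsfl}), subject to a coherence condition along inclusions of primes; and that \eqref{r description} then realizes $\tors\L$ as the poset of all such coherent families of fiber torsion classes. Accordingly I would split the proof into two parts: first, verify that $\L$ is compatible; second, identify the poset of coherent families with $\Hom_{\rm poset}(\Spec R,\twosilt\L)$. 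The bijection \eqref{torsfl} and Proposition~\ref{prop-local-iso} are, throughout, the tools that convert statements about $\twosilt$ into statements about fiber torsion classes and back.

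For the first part, one direction of compatibility --- that $\cT$ is recovered from its reductions --- follows from (a) together with the exactness of localization: by (a) every $2$-term silting complex, hence every torsion class of the form $\gen H^0(P)$, is compatible with $(-)_\p$, and a torsion class of $\mod\L_\p$ is in turn pinned down by its intersection with $\Fl\L_\p$, equivalently (via \eqref{torsfl}) by its reduction to $\kappa(\p)\otimes_{R}\L$. The other direction --- that every coherent family of fiber torsion classes arises from a genuine $\cT\in\tors\L$ --- is where (b) enters: (b) lets us write each prescribed fiber torsion class as $\gen(\kappa(\p)\otimes_{R_\p}H^0(P_\p))$ for some $P_\p\in\twosilt\L_\p$, and then the surjectivity of $(-)_\p$ and the isomorphisms $\twosilt\L_\p\simeq\twosilt\L_\q$, both available by Proposition~\ref{prop-local-iso}, let one lift and glue the local complexes $P_\p$ into a single $P\in\twosilt\L$ whose torsion class $\gen H^0(P)$ realizes the given family. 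This gluing would be organized by Noetherian induction on $\Spec R$, stratifying it into finitely many locally closed subsets on which the silting datum is constant.

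For the second part, hypothesis (a) identifies every poset $\twosilt\L_\p$ that occurs with the single poset $\twosilt\L$, so a coherent family becomes a function $g\colon\Spec R\to\twosilt\L$; the coherence condition along an inclusion $\q\subseteq\p$, read through the isomorphism $\twosilt\L_\p\simeq\twosilt\L_\q$ of Proposition~\ref{prop-local-iso} and the definition of the order on $\twosilt$, becomes exactly order-preservation of $g$ (with $\Spec R$ ordered by inclusion), while (b) --- which furnishes the surjection $\twosilt\L_\p\to\tors(\kappa(\p)\otimes_{R}\L)$, hence the way back to fiber torsion classes --- shows conversely that an order-preserving $g$ always yields a coherent family. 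Comparing the orders on the two sides then gives the poset isomorphism $\tors\L\simeq\Hom_{\rm poset}(\Spec R,\twosilt\L)$. I expect the main obstacle to be the surjectivity step of the first part: assembling a single $2$-term silting complex over $\L$ out of the pointwise data of Proposition~\ref{prop-local-iso} and controlling how $H^0$ behaves under this assembly is the one place that needs an honest construction, the rest being bookkeeping with localization and \eqref{torsfl}.
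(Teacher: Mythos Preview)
The paper's proof is far shorter than your sketch: it observes that, via the bijection \eqref{torsfl}, condition (b) is equivalent to the statement that $\{\Fl\L_\p\cap\gen H^0(P)\mid P\in\twosilt\L_\p\}=\tors(\Fl\L_\p)$ for every $\p$, which is precisely the remaining hypothesis of \cite[Theorem 4.13]{IK}; the conclusion then follows by citing that theorem directly. In other words, the proposition here is a restatement of the cited result with one hypothesis rephrased, and the proof consists only of checking that rephrasing. You are instead attempting to reprove \cite[Theorem 4.13]{IK} from scratch, which is a different and much larger undertaking.

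Within your sketch there is a genuine gap in the compatibility argument. You propose to realize an arbitrary coherent family $(\cX^\p)_\p$ by lifting each $\cX^\p$ to some $P_\p\in\twosilt\L_\p$ and then ``gluing the local complexes $P_\p$ into a single $P\in\twosilt\L$ whose torsion class $\gen H^0(P)$ realizes the given family.'' But a coherent family need not be constant along $\Spec R$: under the eventual isomorphism it corresponds to an \emph{arbitrary} order-preserving map $g\colon\Spec R\to\twosilt\L$, so the $P_\p$ genuinely vary with $\p$ and cannot in general be assembled into a single silting complex. Torsion classes of the form $\gen H^0(P)$ for a fixed $P\in\twosilt\L$ correspond only to the constant maps. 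Your parenthetical about stratifying $\Spec R$ into pieces on which the silting datum is constant hints at the right picture, but it does not repair the argument: one must still construct a torsion class of $\mod\L$ (not a silting complex) whose fiber at each $\p$ is the prescribed $\cX^\p$, and explain why the order-preservation of $g$ guarantees that the pieces can be combined into a single torsion class. That construction is the actual content of \cite[Theorem 4.13]{IK}, and your sketch does not supply it.
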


\begin{proof}
Let $\stors(\Fl\L_\p):=\{\Fl\L_\p \cap \gen H^0(P) \mid P \in \twosilt\L_\p\}$. Thanks to the bijection \eqref{torsfl}, the condition (b) is equivalent to $\stors(\Fl\L_\p)=\tors(\Fl\L_\p)$.
Thus the assertion follows immediately from \cite[Theorem 4.13]{IK}.
\end{proof}

\section{Clusters and silting modules over $RQ$}

We define clusters which play an important role in this paper.

\begin{dfn}\label{dfn-cluster}
Let $Q$ be a finite acyclic quiver, $\Phi^{\rm rS}$ the set of real Schur roots, and $-\Pi = \{-\ep_i \mid i \in Q_0\}$.
\begin{enumerate}[{\rm(a)}]
\item Consider the set of  \emph{cluster variables}
\[\Cv(Q) :=\Phi^{\rm rS}\sqcup (-\Pi).\]
\item For a subset $\cS$ of $\Cv(Q)$, let $\cS_{+}:=\cS \cap \Phi^{\rm rS}$ and $\cS_{-}:=\cS\setminus\cS_{+}$. 
We call $\cS$ a \emph{precluster} if the following conditions are satisfied.
\begin{enumerate}[$\bullet$]
\item[(i)] For each $\alpha,\beta\in\cS_{+}$, we have $E(\alpha,\beta)=0$,
\item[(ii)] For each $\alpha\in\cS_{+}$ and $-\ep_i\in\cS_{-}$, we have $i\notin\supp\alpha:=\{i\in Q_0 \mid \alpha_i\neq 0\}$.
\end{enumerate}
A \emph{cluster} is a precluster set which is maximal with respect to the inclusion. A precluster $\cS$ is called \emph{positive} if $\cS=\cS_+$.
We denote by $\Clus(Q)$ (respectively, $\pClus(Q)$, $\pClus_+(Q)$) the set of all clusters (respectively, preclusters, positive preclusters).
\item For $\cS, \cS' \in \Clus(Q)$, we write $\cS \geq \cS'$ if the following conditions are satisfied.
\begin{enumerate}[{\rm(i)}]
\item $E(\alpha, \alpha')=0$ for each $\alpha\in \cS$ and $\alpha'\in \cS'$.
\item $\cS_-\subset\cS'_-$.
\end{enumerate}
\end{enumerate}
\end{dfn}

\begin{rem}\label{rem-idem}
For a full subquiver $Q'$ of $Q$, $RQ'$ is a factor algebra of $RQ$ modulo the ideal generated by the idempotent $e=\sum_{i\in Q_0\setminus Q_0'}e_i$.
Thus we can identify $\mod RQ'$ with the Serre subcategory of $\mod RQ$ consisting of $M$ satisfying $eM=0$.
In particular, we have
\begin{align*}
\Ext_{RQ}^1(X,Y) \simeq \Ext_{RQ'}^1(X, Y)\quad \mbox{for $X,Y\in\mod RQ'$.}
\end{align*}
For example, an $RQ$-module $M_{\alpha}^R$ is an $RQ'$-module if and only if $\supp \alpha\subseteq Q_0'$ holds.
\end{rem}

\begin{dfn}\label{dfn-sttilt}
Let $R$ be a commutative Noetherian ring, $Q$ a finite acyclic quiver and $T\in\mod RQ$.
\begin{enumerate}[{\rm (1)}]
\item $T$ is a \emph{partial tilting} module if there exists an exact sequence $0\to P^{-1}\to P^0\to T\to0$ with $P^i\in\proj RQ$ and $\Ext^1_{RQ}(T,T)=0$ holds. Also $T$ is a \emph{tilting} module if it is a partial tilting and there exists an exact sequence $0\to RQ\to T^0\to T^1\to0$ with $T^i\in\add T$.
We denote by $\ptilt RQ$ the set of additive equivalence classes of partial tilting $RQ$-modules.
\item $T$ is a \emph{support tilting $RQ$-module} if there exists a full subquiver $Q'$ of $Q$ such that $T$ is a tilting $RQ'$-module. We denote by $\stilt RQ$ the set of additive equivalence classes of support tilting $RQ$-modules.
Then $\stilt RQ$ is a poset with the following order: $N \leq M$ if $\gen N \subseteq \gen M$.
\end{enumerate}
\end{dfn}

We denote the inverse of the map in Proposition \ref{prop-CB-thm1}(b) by
\[M_- : \Phi^{\rm rS}\simeq\excep \bZ Q,\quad \alpha \mapsto M_{\alpha},\]
and let $M_{-\epsilon_i}:=0$ for each $i\in Q_0$.
Let $R$ be a commutative ring, $\alpha\in\Cv(Q)$ and $\cS\subset\Cv(Q)$. We define
\[
M_\alpha^R:= R\otimes_{\bZ}M_{\alpha}\in\mod RQ\ \mbox{ and }\ M_{\cS}^R:=\bigoplus_{\alpha\in\cS}M_{\alpha}^R\in\Mod RQ.
\]
For each $\alpha\in\Phi^{\rm rS}$, we have $\projdim_{\bZ Q}M_\alpha\le 1$ by Proposition \ref{prop-CB-thm1}(d). Let $P_{\alpha}=(P^{-1}_\alpha\to P^0_\alpha)\in \sK^{\rm b}(\proj \bZ Q)$ be a projective resolution of $M_\alpha$. For $i\in Q_0$, let $P_{-\epsilon_i}:=(\bZ Q)e_i[1]\in\sK^{\rm b}(\proj \bZ Q)$. We define
\[P_\alpha^R:= R\otimes_{\bZ}P_{\alpha}\in\sK^{\rm b}(\proj RQ)\ \mbox{ and }\ P_{\cS}^R:=\bigoplus_{\alpha\in\cS}P_{\alpha}^R\in\sK^{\rm b}(\Proj RQ).\]
Clearly $P_\alpha^R\simeq M_\alpha^R$ and $P_\cS^R\simeq M_\cS^R$ hold in $\sD(RQ)$, and $H^0(P_\alpha^R)=M_\alpha^R$ and $H^0(P_\cS^R)=M_\cS^R$ hold in $\Mod RQ$.

Now we are ready to state our main result of this paper.
\begin{thm}\label{thm-cluster-siltm}
Let $R$ be a commutative Noetherian ring which is ring indecomposable, and $Q$ a finite acyclic quiver.
\begin{enumerate}[\rm(a)]
\item We have isomorphisms of posets
\[
M_{-}^R:\Clus(Q) \xrightarrow[\sim]{\ P_{-}^R\ } \twosilt RQ\xrightarrow[\sim]{\ H^0\ }\stilt RQ.\]
\item We have bijections
\[P_{-}^R : \pClus(Q) \simeq \twopsilt RQ\ \mbox{ and }\ M_{-}^R:\pClus_+(Q) \simeq \ptilt RQ.\]
\end{enumerate}
\end{thm}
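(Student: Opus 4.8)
The plan is to reduce the theorem to the known case of a field (Ingalls--Thomas \cite{Ingalls-Thomas}) by combining the localization machinery of Proposition \ref{prop-local-iso} with the idempotent-reduction statements (Lemma \ref{idempotent reduction}, Proposition \ref{idempotent surjective}). The backbone of the argument is to first establish that $(-)_\p\colon \twosilt RQ\to\twosilt R_\p Q$ is an isomorphism of posets for every $\p\in\Spec R$, and then to identify $\twosilt R_\p Q$ with $\twosilt kQ$ for the residue field $k=\kappa(\p)$ via $\kappa(\p)\otimes_{R_\p}-$, which is legitimate because $R_\p Q\to\kappa(\p)Q$ is the quotient by the idempotent-free Jacobson-radical-type reduction and, crucially, the exceptional $kQ$-modules lift along $R\to\bZ$ (Proposition \ref{prop-CB-thm1}(b),(d)) so that the Schur-root combinatorics of $\Clus(Q)$ is literally the same over $R$ and over $k$. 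Concretely, for (a) I would show the composite $\Clus(Q)\xrightarrow{M_-^{\bZ},\,P_-^{\bZ}}\twosilt\bZ Q$ is a poset isomorphism, and then base-change to $R$.

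\textbf{Step 1: the base ring $\bZ$.} Verify that $P_-^{\bZ}$ and $M_-^{\bZ}$ are well-defined, i.e.\ that for a precluster $\cS$ the complex $P_\cS^{\bZ}$ is presilting and for a cluster it is silting. Presilting of $P_\cS^{\bZ}$ is equivalent to $\Hom_{\sK^{\rm b}}(P_\alpha^\bZ,P_\beta^\bZ[1])=\Ext^1_{\bZ Q}(M_\alpha,M_\beta)=0$ for all $\alpha,\beta\in\cS$; by Proposition \ref{prop-CB-thm1}(a) this group is free of rank $E(\urank M_\alpha,\urank M_\beta)=E(\alpha,\beta)$, which vanishes by precluster condition (i) when both roots are positive, and the mixed/negative cases reduce to condition (ii) and $\Hom(\bZ Q e_i[1],-[1])$ computations. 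The silting property for a cluster follows because a maximal presilting 2-term complex over a (suitably finite) algebra is silting — here one uses that $\thick$ of a complex of the right "rank" is everything, together with the fact (Proposition \ref{idempotent reduction}) that the negative part $\bZ Q e$ accounts exactly for the support drop. Conversely, any 2-term (pre)silting complex $P$ over $\bZ Q$ decomposes, after splitting off $\bZ Q e[1]$ for $e=\sum_{i\notin\supp}e_i$, as $\bigoplus P_{\alpha_j}$ with each $P_{\alpha_j}$ indecomposable presilting hence (by Proposition \ref{prop-CB-thm1}(b),(c) applied to the rigid module $H^0$) indexed by a real Schur root; mutual $\mathrm{Hom}$-vanishing forces $E(\alpha_i,\alpha_j)=0$, so the index set is a precluster, and maximality of silting matches maximality of clusters. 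The order on $\twosilt$ translates to Definition \ref{dfn-cluster}(c) by unwinding $\Hom(P,Q[1])=0$ into $E$-vanishing plus the containment of negative parts.

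\textbf{Step 2: passing to $RQ$.} Apply Proposition \ref{prop-CB-thm1}(d) with $S=\bZ$, $T=R$: since each $M_\alpha$ and each term of $P_\alpha$ is a $\bZ Q$-lattice of projective dimension $\le 1$, base change $R\otimes_\bZ-$ commutes with $\Ext^1$, so $P_\cS^R$ is presilting over $RQ$ iff $P_\cS^\bZ$ is over $\bZ Q$, and similarly the poset order is preserved. To get surjectivity of $M_-^R$ and $P_-^R$ — i.e.\ that every 2-term (pre)silting complex of $RQ$ is of this form — I would invoke Proposition \ref{prop-local-iso}: condition (a$'$)(i), surjectivity of $(-)_\p$, holds by the lifting statement in Proposition \ref{idempotent surjective} combined with Example \ref{RQ dense} (the support-subquiver reduction is dense) and the field case; condition (a$'$)(ii), that $(-)_\q\colon\twosilt R_\p Q\to\twosilt R_\q Q$ is an isomorphism for $\q\subseteq\p$, follows because both localizations sit between $\bZ Q$ and $\kappa(\p)Q,\kappa(\q)Q$ and the indexing set $\Clus(Q)$ is constant. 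Hence $(-)_\p$ is a poset isomorphism for all $\p$; since a 2-term silting complex is determined by its localizations and each localization is $P_{\cS}^{R_\p}$ for the same $\cS$ (the $\p$-independence of the Schur-root labels), we conclude $P_-^R$ is bijective. The identification $\twosilt RQ\xrightarrow{H^0}\stilt RQ$ with the $\gen$-order comes from $H^0(P_\alpha^R)=M_\alpha^R$, the standard equivalence between 2-term silting complexes and support tilting modules (the negative summands $RQe_i[1]$ ↔ dropping vertex $i$), which over $RQ$ is controlled by Proposition \ref{prop-CB-thm1}(c) guaranteeing $H^0$ is a partial tilting lattice and hence support tilting. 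Part (b) is the "drop maximality" version: a positive precluster gives a partial tilting module by the same $\Ext^1$-vanishing and the resolution $0\to P^{-1}\to P^0\to M_\cS^R\to 0$, and conversely a partial tilting $RQ$-module is by Proposition \ref{prop-CB-thm1}(c) a sum of exceptional lattices, whose rank vectors form a positive precluster by (a); the precluster (not necessarily positive) case allows negative simple summands and is handled by Lemma \ref{idempotent reduction}.

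\textbf{Main obstacle.} The hard part will be Step 2's verification that $(-)_\q\colon\twosilt R_\p Q\to\twosilt R_\q Q$ is a bijection for $\q\subseteq\p$ — equivalently, proving that the set of 2-term silting complexes is genuinely insensitive to which commutative Noetherian (ring indecomposable) base one uses, i.e.\ that there is no "extra" silting complex appearing or disappearing under specialization. This is where one really needs the fact that $E(\alpha,\beta)$ is a purely combinatorial invariant independent of the field (the Schofield/Crawley-Boevey formulas quoted after Definition \ref{dfn-E-inv}), together with Crawley-Boevey's rigidity results (Proposition \ref{prop-CB-thm1}) which pin down the indecomposable rigid lattices over any PID — the general Noetherian case then follows by the localization reduction since $R_\p$ need not be a PID, forcing a slightly delicate argument (present also in \cite{IK}) that reduces to residue fields via (a$'$). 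Everything else is assembling known pieces: the field case \cite{Ingalls-Thomas}, the idempotent-reduction lemmas, and the localization criterion.
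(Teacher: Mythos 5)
Your outline for part (a) follows essentially the paper's own route: base change from $\bZ$ gives well-definedness and injectivity, the local case is settled by reducing $\twosilt R_\p Q$ to $\twosilt \kappa(\p)Q$ and using the field case, and Proposition \ref{prop-local-iso} patches the localizations together. Two caveats there: surjectivity of $(-)_\p:\twosilt RQ\to\twosilt R_\p Q$ does not come from Proposition \ref{idempotent surjective} (that statement concerns idempotent quotients $\L\to\L/\langle e\rangle$, not localizations); the correct argument, as in the paper, is that $P^{R_\p}_-$ is surjective and factors as $(-)_\p\circ P^R_-$. Also, the identification $\twosilt R_\p Q\simeq\twosilt\kappa(\p)Q$ is a nontrivial radical-reduction result (it is \cite[Proposition 4.3]{IK}, using that $R_\p Q$ is semiperfect), and your ``maximal $2$-term presilting over $\bZ Q$ is silting by a rank count'' is not available as stated, since $\sK^{\rm b}(\proj\bZ Q)$ is not Krull--Schmidt; the paper instead proves silting via the Bongartz completion of the partial tilting module $M^{\bZ}_\cS$ over the PID $\bZ$ (Lemma \ref{lem-comp-ptilt}) and the count $|\cS|=|Q_0|$ at the level of clusters.

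The genuine gap is in part (b). For surjectivity of $\pClus(Q)\to\twopsilt RQ$ and $\pClus_+(Q)\to\ptilt RQ$ you invoke Proposition \ref{prop-CB-thm1}(c) to decompose a partial tilting (or rigid) $RQ$-module into exceptional lattices $M^R_\alpha$, but that result is only available when $R$ is a principal ideal domain; over a general commutative Noetherian ring such a decomposition is exactly the content of what must be proved, so the argument is circular there, and your remark that the non-positive precluster case ``is handled by Lemma \ref{idempotent reduction}'' does not supply the missing decomposition of an arbitrary $2$-term presilting complex (over a general base one cannot even split off a module part plus shifted projective part). The paper's proof of (b) is genuinely different at this point: given $P\in\twopsilt RQ$, it first applies Bongartz completion (\cite[Lemma 4.2]{IJY}) to embed $P$ into a $2$-term silting complex, uses part (a) to write that completion as $P^R_\cS$ for a cluster $\cS$, then localizes at each $\p$, where $\add P^{R_\p}_\cS$ is Krull--Schmidt because $\End_{\sD(R_\p Q)}(P^{R_\p}_\alpha)=R_\p$ is local, so $\add P_\p=\add P^{R_\p}_{\cS^\p}$ for a subset $\cS^\p\subseteq\cS$; ring indecomposability of $R$ forces $\cS^\p$ to be independent of $\p$, and \cite[Proposition 2.26]{Iyama-Wemyss-maximal} then globalizes the local additive equivalences to $\add P=\add P^R_{\cS'}$. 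Without this (or some substitute local-global argument), your proof of (b) does not go through for $R$ that is not a PID.
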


The first step of the proof of Theorem \ref{thm-cluster-siltm} is the following observation.
\begin{prop}\label{prop-cluster-stilt}
Let $R$ be a commutative Noetherian ring, $Q$ a finite acyclic quiver.
We have the following three injective maps, which are bijective if $R$ is a principal ideal domain.
\begin{enumerate}[\rm(a)]
\item {\rm (cf.\,\cite[Theorem B]{CB3})} $M^R_-:\Phi^{\rm rS}  \to \excep RQ$,
\item $M^R_-:\pClus_+(Q)\to \ptilt RQ$, $P^R_-:\pClus(Q)\to \twopsilt RQ$, $M^R_-:\Clus(Q)\to \stilt RQ$ and $P^R_-:\Clus(Q)\to \twosilt RQ$.
\end{enumerate}
\end{prop}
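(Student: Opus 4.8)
For part~(a), which is essentially \cite[Theorem B]{CB3}: since $M_\alpha$ is a $\bZ Q$-lattice with $\urank M_\alpha=\alpha$, the base-change isomorphisms of Proposition~\ref{prop-CB-thm1}(d) along $\bZ\to R$ give that $M_\alpha^R=R\otimes_\bZ M_\alpha$ is an $RQ$-lattice with $\Ext^1_{RQ}(M_\alpha^R,M_\alpha^R)=0$, $\End_{RQ}(M_\alpha^R)\cong R$ and $e_iM_\alpha^R\cong R^{\alpha_i}$; thus $M_\alpha^R\in\excep RQ$, $\urank M_\alpha^R=\alpha$, and $M_-^R$ is a section of $\urank$, hence injective, and bijective when $R$ is a PID by Proposition~\ref{prop-CB-thm1}(b). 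The engine for~(b) is the computation, for $\alpha,\beta\in\Cv(Q)$ and $j\ge1$, of $\Hom_{\sK^{\rm b}(\proj RQ)}(P_\alpha^R,P_\beta^R[j])$: it vanishes for $j\ge2$ because the complexes are $2$-term, and for $j=1$ it equals $\Ext^1_{RQ}(M_\alpha^R,M_\beta^R)\cong R\otimes_\bZ\Ext^1_{\bZ Q}(M_\alpha,M_\beta)$, which is free of rank $E(\alpha,\beta)$ by Proposition~\ref{prop-CB-thm1}(a),(d), when $\alpha,\beta\in\Phi^{\rm rS}$; equals $e_iM_\beta^R\cong R^{\beta_i}$ when $\alpha=-\ep_i$ and $\beta\in\Phi^{\rm rS}$; and vanishes when $\beta\in-\Pi$.

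It follows that $P_\cS^R$ is $2$-term presilting exactly when the two precluster conditions of Definition~\ref{dfn-cluster}(b) hold, i.e.\ exactly when $\cS\in\pClus(Q)$; and for $\cS\in\pClus_+(Q)$ the module $M_\cS^R$ is a rigid lattice (each $\Ext^1_{RQ}(M_\alpha^R,M_\beta^R)$ has rank $E(\alpha,\beta)=0$), so partial tilting by Proposition~\ref{prop-CB-thm1}(d). For a cluster $\cS$ I would put $I=\{i\mid-\ep_i\in\cS_-\}$ and let $Q'$ be the full subquiver on $Q_0\setminus I$; then every $M_\alpha$ ($\alpha\in\cS_+$) is a $\bZ Q'$-module by condition~(ii), and I would show the rigid $\bZ Q'$-lattice $M_{\cS_+}^{\bZ}$ is in fact a tilting $\bZ Q'$-module: Bongartz's lemma over the Noetherian ring $\bZ Q'$ extends it to a tilting module $M_{\cS_+}^{\bZ}\oplus E$, Proposition~\ref{prop-CB-thm1}(c) writes $E$ as a sum of exceptional lattices $M_\beta^{\bZ}$, and were some $\beta\notin\cS_+$, then $M_{\cS_+}^{\bZ}\oplus M_\beta^{\bZ}$ would be rigid, so (Proposition~\ref{prop-CB-thm1}(a)) $\cS\cup\{\beta\}$ would be a precluster strictly containing $\cS$, a contradiction; hence $E\in\add M_{\cS_+}^{\bZ}$. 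Base changing along $\bZ\to R$—the defining sequences survive since $\bZ Q'$-lattices are $\bZ$-free—shows $M_\cS^R=M_{\cS_+}^R$ is a tilting $RQ'$-module, hence support tilting over $RQ$; and $P_\cS^R$, already presilting, is silting because $\thick P_\cS^R$ contains $RQe_i$ for $i\in I$ and, via the $2$-term resolution of the tilting sequence $0\to RQ'\to T^0\to T^1\to0$ with $T^j\in\add M_{\cS_+}^R$, also $RQe_j$ for every $j\in Q_0$. This gives well-definedness of all four maps in~(b).

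For injectivity I would pass to a residue field. If $\add M_\cS^R=\add M_{\cS'}^R$, then for any $\p\in\Spec R$ the $M_\alpha^{\kappa(\p)}$ ($\alpha\in\Phi^{\rm rS}$) are pairwise non-isomorphic indecomposables ($\End=\kappa(\p)$ and $\Dim=\alpha$, by base change), so $\cS_+=\cS'_+$; for clusters this already forces $\cS=\cS'$, since a tilting module is faithful, hence sincere, so $\cS_-=\{-\ep_i\mid i\notin\bigcup_{\alpha\in\cS_+}\supp\alpha\}$ is determined by $\cS_+$. In the presilting case one recovers $\cS_+$ by applying $H^0$, and recovers $I$ from the summands of $P_\cS^R$ concentrated in degree $-1$ (choosing the $P_\alpha$ minimal, there are no such summands inside $P_{\cS_+}^R$), so again $\cS=\cS'$.

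Finally, for bijectivity when $R$ is a PID: Proposition~\ref{prop-CB-thm1}(c) writes any finitely generated rigid $RQ$-module as $\bigoplus_j M_{\alpha_j}^R$ with the $\alpha_j\in\Phi^{\rm rS}$ distinct, and Proposition~\ref{prop-CB-thm1}(a) turns rigidity into $E(\alpha_i,\alpha_j)=0$; so a partial tilting module is $M_\cS^R$ with $\cS=\{\alpha_j\}\in\pClus_+(Q)$, and a support tilting module—a tilting $RQ'$-module on its support $Q'$—is $M_\cS^R$ with $\cS=\{\alpha_j\}\cup\{-\ep_i\mid i\notin Q'_0\}$, which is a cluster because a tilting module is maximal rigid and sincere. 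For a $2$-term (pre)silting complex $P$, after splitting off the degree-$(-1)$ summands $RQe_i[1]$ the differential of the remaining complex $P'$ has kernel a lattice (a submodule of a projective over a PID), hence projective by Proposition~\ref{prop-CB-thm1}(d), hence $0$ (else it would be a further degree-$(-1)$ summand); so $P'$ resolves its cohomology $H^0(P')$, which is rigid and therefore, by Proposition~\ref{prop-CB-thm1}(c), equal to $M_{\cS_+}^R$, whence $P=P_\cS^R$. \textbf{The main obstacle} I expect is precisely the cluster case of~(b) and its converse just above—matching the combinatorial maximality of a precluster with the homological maximality that makes a rigid lattice a tilting module—which I would resolve via Crawley-Boevey's structure theorem (Proposition~\ref{prop-CB-thm1}(c)), Bongartz's lemma over $\bZ Q'$, and, wherever a count of indecomposable summands is needed, a reduction to the classical tilting theory over $\overline{\bQ}Q'$ by base change.
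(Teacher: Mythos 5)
Your overall route is the same as the paper's: base change along $\bZ\to R$ plus Crawley-Boevey's results (Proposition \ref{prop-CB-thm1}) to translate $E$-invariants into $\Ext^1$'s (this is the paper's Lemma \ref{lem-comp-ptilt-a}), Bongartz completion over $\bZ Q^{\cS}$ together with maximality of the cluster to get the tilting property (the paper's Lemma \ref{lem-comp-ptilt}), base change to a field for injectivity and for all counting arguments, and Proposition \ref{prop-CB-thm1}(c) for surjectivity over a PID. Those parts are fine and match the paper, which in fact only writes out the $\stilt$ case of surjectivity and declares the rest ``similar''.

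The one place where your argument, as written, does not work is the surjectivity of $P^R_-$ onto $\twopsilt RQ$ and $\twosilt RQ$ for a PID. You argue: after splitting off the visible summands $RQe_i[1]$, the kernel of the differential of the remaining complex $P'$ is a submodule of a projective, hence a lattice, ``hence projective by Proposition \ref{prop-CB-thm1}(d), hence $0$''. Proposition \ref{prop-CB-thm1}(d) only gives $\projdim\le 1$ for lattices, not projectivity (e.g.\ the simple at a source of $Q:1\to 2$ is a non-projective $\bZ Q$-lattice), so ``lattice $\Rightarrow$ projective'' is a non sequitur; and even if $\ker d$ were projective, that alone would not split it off as a subcomplex summand --- what makes the degree $-1$ term split is projectivity of $\im d$, equivalently $\projdim H^0(P')\le 1$. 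Moreover $H^0(P')$ is a quotient, so it is not automatically a lattice; you cannot first kill the kernel and only afterwards invoke rigidity. The repair uses exactly the tools you already have, but in the opposite order: for any $2$-term presilting $P$ the standard surjection $\Hom_{\sK^{\rm b}(\proj RQ)}(P,P[1])\twoheadrightarrow\Ext^1_{RQ}(H^0(P),H^0(P))$ shows $H^0(P)$ is rigid; Proposition \ref{prop-CB-thm1}(c) then makes it a direct sum of exceptional lattices $M^R_{\cS'_+}$, in particular a lattice with $\projdim\le1$, whence $\im d$ is projective, the sequence $0\to\ker d\to P^{-1}\to\im d\to 0$ splits, and $P\simeq(\ker d)[1]\oplus P^R_{\cS'_+}$ with $\ker d$ projective. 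One further small point you pass over: you still must identify $\add(\ker d)$ with $\add RQe_I$ for a set of vertices $I$ compatible with the support condition; this again follows from Proposition \ref{prop-CB-thm1}(b)(c) (a projective exceptional lattice has the rank vector of some $RQe_i$ after base change to $\bQ$, hence is isomorphic to it), but it needs saying, since otherwise $\add P=\add P^R_{\cS}$ for a precluster $\cS$ is not yet established.
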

We start with proving (a).
\begin{proof}[Proof of Proposition \ref{prop-cluster-stilt}(a)]
Since $M_\alpha$ is an exceptional $\bZ Q$-lattice by Proposition \ref{prop-CB-thm1}(b), $M_\alpha^R=R\otimes_{\bZ}M_\alpha$ is an exceptional $RQ$-lattice by Proposition \ref{prop-CB-thm1}(d). Thus the map in (a) is well-defined.
This map is injective since $\urank M_{\alpha}^R=\alpha$ holds.
This map is bijective by Proposition \ref{prop-CB-thm1}(b) if $R$ is a principal ideal domain.
\end{proof}

For $\cS \subset \Cv(Q)$, let $Q^{\cS}$ be the full subquiver of $Q$ with $Q^{\cS}_0=\{i\in Q_0\mid -\ep_i\notin\cS\}$.
\begin{lem}\label{lem-comp-ptilt-a}
Let $R$ be a commutative Noetherian  ring, $Q$ a finite acyclic quiver and $\cS \subset \Cv(Q)$.
Then $\cS\in\pClus(Q)$ if and only if $M_{\cS}^R$ is a partial tilting $RQ^{\cS}$-module if and only if $P_{\cS}^R\in\twopsilt RQ$.
In this case, $|\cS|\leq |Q_0|$.
\end{lem}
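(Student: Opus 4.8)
The plan is to reduce the statement about $RQ$ to the full subquiver $Q^{\cS}$ and then to combine Proposition \ref{prop-CB-thm1}(a), Lemma \ref{idempotent reduction}, and the definition of precluster. First I would dispose of the negative simple roots. Writing $e := \sum_{i \in Q_0 \setminus Q^{\cS}_0} e_i$, condition (ii) in the definition of precluster says precisely that $e M^R_{\cS_+} = 0$, i.e.\ $M^R_{\cS} = M^R_{\cS_+}$ is an $RQ^{\cS}$-module, and $P^R_{\cS} = (RQ)e[1] \oplus P^R_{\cS_+}$ with $P^R_{\cS_+}$ a $2$-term complex over $RQ$ whose image $\overline{P^R_{\cS_+}}$ under $\overline{(-)} := RQ^{\cS} \otimes_{RQ} -$ is a $2$-term complex over $RQ^{\cS}$ with $H^0 = M^R_{\cS_+}$. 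By Lemma \ref{idempotent reduction} applied to $\L = RQ$, $\overline{\L} = RQ^{\cS}$ and this idempotent $e$, we have $P^R_{\cS} \in \twopsilt RQ$ if and only if $e H^0(P^R_{\cS_+}) = 0$ and $\overline{P^R_{\cS_+}} \in \twopsilt RQ^{\cS}$; the first condition is automatic since $M^R_{\cS_+}$ is supported off the vertices removed, so $P^R_{\cS} \in \twopsilt RQ \iff P^R_{\cS_+} \in \twopsilt RQ^{\cS}$. Likewise $M^R_{\cS}$ is a partial tilting $RQ^{\cS}$-module iff $M^R_{\cS_+}$ is. So from now on I may as well assume $\cS = \cS_+$ is positive, $Q = Q^{\cS}$, and I must show: $\cS \in \pClus_+(Q) \iff M^R_{\cS}$ is partial tilting over $RQ \iff P^R_{\cS} \in \twopsilt RQ$.

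Next I would handle the equivalence between the precluster condition (i) and rigidity. By Proposition \ref{prop-cluster-stilt}(a), each $M^R_\alpha$ for $\alpha \in \Phi^{\rm rS}$ is an exceptional, hence rigid, $RQ$-lattice; in particular $\projdim_{RQ} M^R_\alpha \le 1$ by Proposition \ref{prop-CB-thm1}(d), so there is an exact sequence $0 \to P^{-1} \to P^0 \to M^R_{\cS} \to 0$ with projective terms, which is one of the requirements for $M^R_{\cS}$ to be partial tilting. The remaining requirement is $\Ext^1_{RQ}(M^R_{\cS}, M^R_{\cS}) = 0$, i.e.\ $\Ext^1_{RQ}(M^R_\alpha, M^R_\beta) = 0$ for all $\alpha, \beta \in \cS$. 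Here the key input is that $E(\alpha,\beta)$ computes these $\Ext$-groups. For $R$ a principal ideal domain this is exactly Proposition \ref{prop-CB-thm1}(a): $\Ext^1_{RQ}(M^R_\alpha, M^R_\beta)$ is free of rank $E(\alpha,\beta)$, so it vanishes iff $E(\alpha,\beta) = 0$. For general commutative Noetherian $R$, I would pass through $\bZ$: since $M_\alpha, M_\beta$ are $\bZ Q$-lattices, Proposition \ref{prop-CB-thm1}(d) gives $\Ext^1_{RQ}(M^R_\alpha, M^R_\beta) \simeq R \otimes_{\bZ} \Ext^1_{\bZ Q}(M_\alpha, M_\beta)$, and by Proposition \ref{prop-CB-thm1}(a) over the PID $\bZ$ the group $\Ext^1_{\bZ Q}(M_\alpha, M_\beta)$ is free of rank $E(\alpha,\beta)$; hence it is zero iff $E(\alpha,\beta) = 0$, and then the base change is zero as well, while if it is nonzero then $R \otimes_{\bZ} \Ext^1_{\bZ Q}(M_\alpha,M_\beta) \neq 0$ since $R \neq 0$. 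This gives $\Ext^1_{RQ}(M^R_{\cS}, M^R_{\cS}) = 0 \iff E(\alpha,\beta) = 0$ for all $\alpha,\beta \in \cS \iff \cS$ satisfies (i). Combined with the previous paragraph, $\cS \in \pClus(Q) \iff M^R_{\cS}$ is a partial tilting $RQ^{\cS}$-module.

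For the equivalence with $P^R_{\cS} \in \twopsilt RQ$ (still in the positive case), I would again base-change to $\bZ$: $P^R_{\cS} = R \otimes_{\bZ} P_{\cS}$, and presilting of a $2$-term complex $P_{\cS}$ is the condition $\Hom_{\sK^{\rm b}(\proj \bZ Q)}(P_{\cS}, P_{\cS}[1]) = 0$, which for the projective resolution $P_\alpha$ of $M_\alpha$ identifies with $\Ext^1_{\bZ Q}(M_{\cS}, M_{\cS})$ (using $\projdim \le 1$), and base change of this group to $R$ computes $\Hom_{\sK^{\rm b}(\proj RQ)}(P^R_{\cS}, P^R_{\cS}[1])$ by Proposition \ref{prop-CB-thm1}(d) again. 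So $P^R_{\cS} \in \twopsilt RQ \iff \Ext^1_{\bZ Q}(M_{\cS}, M_{\cS}) = 0 \iff \Ext^1_{RQ}(M^R_{\cS}, M^R_{\cS}) = 0$, which closes the circle of equivalences. Finally, for the cardinality bound: if $P^R_{\cS} \in \twopsilt RQ$ then, localizing at a maximal ideal and base-changing to the residue field $k$ (using Proposition \ref{prop-CB-thm1}(d) to see $k \otimes_{\bZ} P_{\cS}$ is still presilting and that distinct $\alpha$ give non-isomorphic indecomposable summands since $\urank$ is preserved), one gets a $2$-term presilting complex over the finite-dimensional algebra $kQ$ with $|\cS|$ pairwise non-isomorphic indecomposable summands; by the standard fact that a $2$-term presilting complex over $kQ$ has at most $|Q_0|$ indecomposable summands (it can be completed to a $2$-term silting complex, which has exactly $|Q_0|$), we conclude $|\cS| \le |Q_0|$. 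The main obstacle I anticipate is making the base-change identifications of Hom/$\Ext$ groups over $\sK^{\rm b}(\proj)$ precise and checking that the idempotent-reduction step via Lemma \ref{idempotent reduction} matches conditions (i)--(ii) on the nose; the rigidity-versus-$E$-vanishing dictionary is then essentially formal.
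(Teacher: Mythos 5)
Your proposal is correct and follows essentially the same route as the paper: you translate condition (i) into rigidity via Crawley--Boevey over $\bZ$ (Proposition \ref{prop-CB-thm1}(a)) plus base change (Proposition \ref{prop-CB-thm1}(d)), translate condition (ii) into $eM^R_{\cS_+}=0$, handle the presilting statement via Lemma \ref{idempotent reduction} (which the paper itself offers as an alternative to its direct $\Hom$-computation), and obtain the bound $|\cS|\le|Q_0|$ by passing to a field, your completion-of-presilting count over $kQ$ being a harmless variant of the paper's count of summands of the rigid module $M^k_{\cS}$ over $kQ^{\cS}$. The one imprecision is the claim that $eH^0(P^R_{\cS_+})=0$ is ``automatic'': for an arbitrary $\cS\subset\Cv(Q)$ this vanishing is exactly condition (ii) and must be carried along on both sides of the reduction (as your own translation makes clear), so the asserted simplification $P^R_{\cS}\in\twopsilt RQ\Leftrightarrow\overline{P^R_{\cS_+}}\in\twopsilt RQ^{\cS}$ is valid only together with that condition.
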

\begin{proof}
(i) We prove $\cS\in\pClus(Q)$ if and only if $M_{\cS}^R$ is an $RQ$-module satisfying $\Ext_{RQ}^1(M_{\cS}^R, M_{\cS}^R)=0$.
By Remark \ref{rem-idem}, the condition Definition \ref{dfn-cluster}(ii) is equivalent to that $M_{\cS}^R$ is an $RQ^{\cS}$-module.
Now assume that these conditions hold.
By Proposition \ref{prop-CB-thm1}(d), $M_{\cS}^R$ has projective dimension at most one as both an $RQ$-module and an $RQ^{\cS}$-module.
Moreover, for $\alpha, \beta\in\Phi^{\rm rS}$, by Proposition \ref{prop-CB-thm1}(a)(d), we have
$E(\alpha,\beta)=0$ if and only if $\Ext^1_{\bZ Q}(M_{\alpha} ,M_{\beta})=0$ if and only if $\Ext^1_{RQ}(M_{\alpha}^R ,M_{\beta}^R)=0$.
Thus the condition Definition \ref{dfn-cluster}(i) equivalent to that $\Ext^1_{RQ}(M_{\cS}^R ,M_{\cS}^R)=0$.

(ii) We prove $\cS\in\pClus(Q)$ implies $|\cS|\leq |Q_0|$.
Let $k$ be a field. By (i), $M^k_{\cS}$ is a $kQ$-module with $\Ext_{kQ}^1(M^k_{\cS}, M^k_{\cS})=0$. Thus $|\cS_+|=|M_{\cS_+}^k| \leq |Q^{\cS}_0| = |Q_0|-|\cS_-|$ and hence $|\cS|\leq |Q_0|$ as desired.

(iii) By (i) and (ii), the first equivalence and the last claim are proved.

(iv) We prove the second equivalence. We always have $\Ext^1_{RQ}(M_{\cS}^R ,M_{\cS}^R)\simeq \Hom_{\sD(RQ)}(P_{\cS_+}^R,P_{\cS_+}^R[1])$ and $\Hom_{\sD(RQ)}(P_{\cS}^R,P_{\cS_-}^R[1])=0$. Moreover, $M_{\cS}^R$ is an $RQ^{\cS}$-module if and only if $\Hom_{\sD(RQ)}(P_{\cS_-}^R,P_{\cS_+}^R[1])=0$. Thus the assertion follows. (Alternatively it follows from Lemma \ref{idempotent reduction}.)
\end{proof}
We use the following lemma to prove Proposition \ref{prop-cluster-stilt}(b).
\begin{lem}\label{lem-comp-ptilt}
Let $Q$ be a finite acyclic quiver, $R$ a principal ideal domain, and $\cS \in \pClus(Q)$.
Then the following conditions are equivalent.
\begin{enumerate}[{\rm(i)}]
\item $\cS\in\Clus(Q)$.
\item[{\rm (ii)$_R$}] $M_{\cS}^R$ is a tilting $R Q^{\cS}$-module.
\item[{\rm (iii)}] $|\cS|=|Q_0|$.
\end{enumerate}
\end{lem}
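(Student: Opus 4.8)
The plan is to prove the cycle of implications (i) $\Rightarrow$ (iii) $\Rightarrow$ (ii)$_R$ $\Rightarrow$ (i), reducing everything to the full subquiver $Q^{\cS}$, where there are no negative simple roots involved. First I would observe that by Lemma \ref{lem-comp-ptilt-a}, $M_{\cS}^R$ is already a partial tilting $RQ^{\cS}$-module, so only the "tilting" part of condition (ii)$_R$ (existence of an exact sequence $0\to RQ^{\cS}\to T^0\to T^1\to 0$ with $T^i\in\add M_{\cS}^R$) needs to be addressed, and this is known over a PID to be equivalent to a numerical condition: a rigid $RQ^{\cS}$-lattice is tilting if and only if the number of its non-isomorphic indecomposable summands equals $|Q^{\cS}_0|$. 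Since $|Q^{\cS}_0| = |Q_0| - |\cS_-|$ and $|\cS| = |\cS_+| + |\cS_-|$, the equality $|\cS| = |Q_0|$ is the same as $|\cS_+| = |Q^{\cS}_0|$, which is exactly the numerical tilting criterion for $M_{\cS}^R = M_{\cS_+}^R$ as an $RQ^{\cS}$-module. This gives (ii)$_R$ $\Leftrightarrow$ (iii) directly, using that the $M_\alpha$ for $\alpha\in\cS_+$ are pairwise non-isomorphic indecomposables (as $\urank M_\alpha^R = \alpha$).

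Next I would handle (i) $\Rightarrow$ (iii) and (iii) $\Rightarrow$ (i), i.e.\ that a precluster is maximal precisely when it has $|Q_0|$ elements. The direction (iii) $\Rightarrow$ (i) is immediate from the bound $|\cS|\leq|Q_0|$ in Lemma \ref{lem-comp-ptilt-a}: a precluster of maximal possible cardinality cannot be properly contained in a larger precluster. For (i) $\Rightarrow$ (iii), suppose $\cS$ is a cluster with $|\cS| < |Q_0|$; I need to produce an element of $\Cv(Q)$ that can be adjoined to $\cS$ while preserving the precluster axioms, contradicting maximality. Working inside $RQ^{\cS}$: since $|\cS_+| < |Q^{\cS}_0|$, the rigid lattice $M_{\cS_+}^R$ is not tilting, so it is not "maximal rigid" in the relevant sense — by Bongartz-type completion for exceptional modules (or, concretely, because a non-tilting rigid module over a hereditary algebra always admits a complement), there exists an indecomposable exceptional $RQ^{\cS}$-lattice $X\not\cong M_\alpha^R$ for $\alpha\in\cS_+$ with $\Ext^1(X\oplus M_{\cS_+}^R, X\oplus M_{\cS_+}^R)=0$. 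Then $\urank X\in\Phi^{\rm rS}$ (by Proposition \ref{prop-CB-thm1}(b), applied over $R=\bZ$ or any PID), and since $X$ is an $RQ^{\cS}$-module we have $\supp(\urank X)\subseteq Q^{\cS}_0$, so $\urank X$ violates neither axiom (i) nor (ii) against $\cS$; hence $\cS\cup\{\urank X\}$ is a strictly larger precluster, a contradiction. Alternatively, one can run this argument on the silting side via Lemma \ref{idempotent reduction} and the known fact that a 2-term presilting complex with fewer than $|Q_0|$ indecomposable summands is not silting and can be completed, but the module-theoretic version over a PID is cleaner here.

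The main obstacle I anticipate is justifying the completion step in (i) $\Rightarrow$ (iii) carefully over a general PID $R$ rather than over a field: one needs that a rigid exceptional $RQ^{\cS}$-lattice with too few summands genuinely admits an exceptional-lattice complement. The safest route is to reduce to a field by base change — choose any field $k$ (e.g.\ a residue field of $R$), note $M_{\cS_+}^k$ is rigid non-tilting over $kQ^{\cS}$ hence admits an indecomposable exceptional complement $Y$ over $kQ^{\cS}$ by the classical hereditary-algebra theory (Bongartz / Crawley-Boevey–Schofield), set $\beta:=\Dim Y\in\Phi^{\rm rS}$, and then lift: the $RQ^{\cS}$-lattice $M_\beta^R$ together with $M_{\cS_+}^R$ is rigid over $RQ^{\cS}$ because $E(\beta,\alpha)=E(\alpha,\beta)=0$ for all $\alpha\in\cS_+$ (these $E$-invariants are computed over any algebraically closed field and transfer to $R$ via Proposition \ref{prop-CB-thm1}(a),(d)), and $\beta\notin\cS$ since $M_\beta^R\not\cong M_\alpha^R$. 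This shows $\cS\cup\{\beta\}\in\pClus(Q)$, contradicting maximality. The rest of the argument is bookkeeping with the identities $|\cS| = |\cS_+|+|\cS_-|$ and $|Q^{\cS}_0| = |Q_0|-|\cS_-|$ together with the numerical tilting criterion, all of which is routine given the results already established in the paper.
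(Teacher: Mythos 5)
Your reductions (i) $\Leftrightarrow$ (iii) are fine and essentially parallel the paper: the bound $|\cS|\le|Q_0|$ from Lemma \ref{lem-comp-ptilt-a} gives (iii) $\Rightarrow$ (i), and your field-reduction completion argument for (i) $\Rightarrow$ (iii) (Bongartz over $kQ^{\cS}$, read off $\beta=\Dim Y\in\Phi^{\rm rS}$, transfer the Ext-vanishing to $E(\alpha,\beta)=E(\beta,\alpha)=0$ via Proposition \ref{prop-CB-thm1}(a)) works, and is a legitimate variant of the paper's argument. Likewise (ii)$_R$ $\Rightarrow$ (iii) by applying $k\otimes_R-$ and counting over the field is unproblematic.

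The genuine gap is the implication (iii) $\Rightarrow$ (ii)$_R$, which you dispose of by asserting that ``a rigid $RQ^{\cS}$-lattice is tilting if and only if the number of its non-isomorphic indecomposable summands equals $|Q^{\cS}_0|$'' is \emph{known} over a PID. This numerical criterion is classical over a field, but over a PID it is not a citable standard fact; restricted to the modules $M^R_{\cS}$ it is exactly the equivalence (ii)$_R$ $\Leftrightarrow$ (iii) you are supposed to prove, so invoking it is close to circular. The hard content is precisely to produce the coresolution $0\to RQ^{\cS}\to T^0\to T^1\to 0$ with $T^i\in\add M^R_{\cS}$ \emph{over $R$}; your base-change technique only transports information in the direction $R\to k$, so knowing $M^k_{\cS}$ is tilting does not lift back. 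The paper closes this by arguing over $R$ itself: perform Bongartz completion over $RQ^{\cS}$ (the universal-extension construction of \cite[VI.\,Lemma 2.4]{ASS} works for a rigid module of projective dimension at most one over the Noetherian ring $RQ^{\cS}$), note the complement $N$ is a rigid lattice, decompose it into exceptional lattices $M^R_{\cS'}$ using Proposition \ref{prop-CB-thm1}(b)(c), observe via Lemma \ref{lem-comp-ptilt-a} that $\cS\cup\cS'$ is a precluster, and use maximality of $\cS$ (i.e.\ condition (i), which at that point you already know follows from (iii)) to force $\add N\subseteq\add M^R_{\cS}$, so $M^R_{\cS}$ itself is tilting. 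Your proposal needs this (or an equivalent) argument spelled out; note also that counting ``non-isomorphic indecomposable summands'' over a PID requires the Crawley-Boevey decomposition into exceptional lattices anyway, since $\mod RQ$ is not Krull--Schmidt, so you cannot avoid Proposition \ref{prop-CB-thm1}(c) here.
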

\begin{proof}
Take a morphism of rings $R \to k$ for a field $k$.
By Lemma \ref{lem-comp-ptilt-a}, $M_{\cS}^R$ is a partial tilting $RQ^{\cS}$-module, and $M_{\cS}^k$ is a partial tilting $kQ$-module.

(i) $\Rightarrow$ (ii)$_R$
Taking the Bongartz completion (cf.\,proof of \cite[VI.\,Lemma 2.4]{ASS}), we obtain an $RQ^{\cS}$-module $N$ and an exact sequence
\[
0 \to RQ^{\cS} \to N \to M' \to 0,
\]
with $M'\in\add M_{\alpha}^R$ such that $M_{\cS}^R\oplus N$ is a tilting $RQ^{\cS}$-module.
By this sequence, $N$ is an $RQ^{\cS}$-lattice.
By Proposition \ref{prop-CB-thm1}(b)(c), there exists a subset $\cS'\subset\Phi^{\rm rS}$ such that $\add N=\add M_{\cS'}^R$.
By Lemma \ref{lem-comp-ptilt-a}, $\cS\cup\cS'$ is a precluster.
By maximality of $\cS$, we have $\cS'\subset\cS$ and hence $\add M_{\cS}^R=\add(M_{\cS}^R\oplus N)$.
Thus $M_{\cS}^R$ is a tilting $RQ^{\cS}$-module.

(ii)$_{R}$ $\Rightarrow$ (ii)$_k$
Applying $k\otimes_R-$ to the exact sequence $0 \to RQ^{\cS} \to T^0 \to T^1 \to 0$ with $T^i \in\add M_{\cS}^R$, we obtain an exact sequence $0 \to kQ^{\cS} \to k\otimes_R T^0 \to k\otimes_R T^1 \to 0$ with $k\otimes_RT^i \in\add M_{\cS}^k$.
Thus $M_{\cS}^k$ is a tilting $kQ^{\cS}$-module.

(ii)$_k$ $\Leftrightarrow$ (iii)
We have $|\cS_+|=|M_{\cS}^k|$.
By \cite[VI.\,Corollary 4.4]{ASS}, $M_{\cS}^k$ is a tilting $kQ^{\cS}$-module if and only if $|\cS_+|=|Q_0^{\cS}|$ if and only if $|\cS| = |Q_0|$.

(iii) $\Rightarrow$ (i) 
Let $\cS\subseteq\cS'\in\pClus(Q)$. Then $|Q_0|=|\cS|\leq |\cS'| \leq |Q_0|$ holds by Lemma \ref{lem-comp-ptilt-a}. Thus $\cS=\cS'$.
\end{proof}

\begin{proof}[Proof of Proposition \ref{prop-cluster-stilt}(b)]
By Lemma \ref{lem-comp-ptilt-a}, $M_{-}^R : \pClus(Q) \to \ptilt RQ$ and $P_{-}^R : \pClus(Q) \to \twopsilt RQ$ are well-defined. To prove the well-definedness of the other maps, let $\cS\in\Clus(Q)$.
Applying Lemma \ref{lem-comp-ptilt} for $R=\bZ$, there exists an exact sequence $0 \to \bZ Q^{\cS} \to T^0 \to T^1 \to 0$ with $T^i\in \add M_{\cS}$.
Applying the functor $R\otimes_{\bZ}(-)$, we obtain an exact sequence $0 \to RQ^{\cS} \to R\otimes_{\bZ}T^0 \to R\otimes_{\bZ}T^1 \to 0$ with $R\otimes_{\bZ}T^i \in \add M_{\cS}^R$.
Thus $M_{\cS}^R$ is a tilting $RQ^{\cS}$-module, and hence $M_{\cS}^R\in\stilt RQ$.

To prove $P_{\cS}^R\in\twosilt RQ$, it suffices to show $RQ \in \thick P_{\cS}^R$. Let $e:=\sum_{-\ep_i \in\cS}e_i$, then $P_{\cS}^R=P_{\cS_+}^R \oplus RQ e[1]$. Since $M_{\cS}^R$ is a tilting $RQ^{\cS}$-module by Lemma \ref{lem-comp-ptilt}, we have $RQ^{\cS} \in\thick M_{\cS_+}^R=\thick P_{\cS_+}^R$. We have an exact sequence $0\to\langle e\rangle\to RQ \to RQ^{\cS} \to 0$ such that $\langle e\rangle\in\proj RQ$ by Proposition \ref{prop-CB-thm1}(d). Since $\langle e\rangle\in\gen RQe$, we have $\langle e\rangle\in\add RQe$. Thus $RQ \in \thick P_{\cS}^R$ holds, as desired. (Alternatively it follows from Lemma \ref{idempotent reduction}.)

We prove the injectivity of $M_{-}^R : \Clus(Q) \to \stilt RQ$.
Take a morphism of rings $R \to k$ for a field $k$.
Let $\cS, \cS'\in\Clus(Q)$ with $\add M_{\cS}^R = \add M_{\cS'}^R$.
Then $\add M_{\cS}^k = \add M_{\cS'}^k$ holds.
Thus $\{\Dim M_{\alpha}^k \mid \alpha\in \cS_+\} = \{\Dim M_{\alpha}^k \mid \alpha\in \cS'_+\}$ and hence $\cS_+=\cS_+'$ holds by (a).
Since each cluster $\cS$ is determined by $\cS_+$, we have $\cS=\cS'$.
Thus $M_{-}^R$ is injective.
The proof of the other maps are similar.

For a principal ideal domain $R$, we prove the surjectivity of $M_{-}^R : \Clus(Q) \to \stilt RQ$.
For $T\in\stilt RQ$, by Proposition \ref{prop-CB-thm1}(c), there exists $\cS'\subset\Phi^{\rm rS}$ such that $M_{\cS'}^R \simeq T$.
Let $\cS=\cS' \sqcup \{-\ep_i \mid i \in Q_0, e_iT=0\}$.
Then $T$ is a tilting $RQ^{\cS}$-module.
By Lemma \ref{lem-comp-ptilt}, $\cS$ is a cluster satisfying $M_{\cS}^R\simeq T$.
The proof of the other maps are similar.
\end{proof}

The second step of the proof of Theorem \ref{thm-cluster-siltm} is the following observations.

\begin{lem}\label{lem-cluster-siltm-local}
Let $Q$ be a finite acyclic quiver, and $R$ a commutative Noetherian ring.
\begin{enumerate}[{\rm (a)}]
\item If $k$ is a field, then $M_{-}^k : \Clus(Q) \to \stilt kQ$ is an isomorphism of posets.
\item If $R$ is a local ring, then $P^R_{-}:\Clus(Q)\to\twosilt RQ$ is an isomorphism of posets.
\item For each prime ideals $\p\supseteq\q$ of $R$, $(-)_{\q} : \twosilt R_{\p}Q \to \twosilt R_{\q}Q$ is an isomorphism of posets.
\item If $R$ is ring indecomposable, then for any $\p\in\Spec R$, $(-)_{\p}: \twosilt RQ \to \twosilt R_{\p}Q$ is an isomorphism of posets.
\end{enumerate}
\end{lem}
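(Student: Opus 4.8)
The plan is to prove the four statements in sequence, using the earlier structural results as black boxes. For part (a), I would combine Proposition~\ref{prop-cluster-stilt} with the classical Ingalls--Thomas bijection over a field. By Proposition~\ref{prop-cluster-stilt}(b) the map $M^k_-:\Clus(Q)\to\stilt kQ$ is well-defined and injective, and since a field is a principal ideal domain, it is also surjective, hence bijective. It remains to check it is an isomorphism of posets, i.e.\ that $\cS\geq\cS'$ (in the sense of Definition~\ref{dfn-cluster}(c)) if and only if $\gen M^k_{\cS'}\subseteq\gen M^k_{\cS}$. Over an algebraically closed field this is exactly the comparison of support tilting modules via their torsion classes in \cite{Ingalls-Thomas}, translated through the $E$-invariant description $E(\alpha,\beta)=\min\{\dim_k\Ext^1_{kQ}(M,N)\}$; for a general field one reduces to the algebraically closed case since $\Phi^{\rm rS}$ and the $E$-invariants are field-independent, and $\gen$ is detected after base change to $\bar k$ (using Proposition~\ref{prop-CB-thm1}(d)).

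For part (b), let $R$ be local with residue field $k$. By Proposition~\ref{prop-cluster-stilt}(b), $P^R_-:\Clus(Q)\to\twosilt RQ$ is well-defined and injective; I must show it is surjective and reflects the order. Surjectivity: given $P\in\twosilt RQ$, reduce modulo the maximal ideal to get $k\otimes_R P\in\twosilt kQ$, which by part (a) equals $P^k_\cS$ for a unique $\cS\in\Clus(Q)$; then lift, using that $H^0(P)$ is a rigid $RQ$-lattice (Nakayama plus Proposition~\ref{prop-CB-thm1}) so decomposes into exceptional lattices $M^R_{\cS'}$ by Proposition~\ref{prop-CB-thm1}(c)--(d) — wait, that requires $R$ PID, so instead I would argue directly: $H^0(P)$ is a tilting $RQ^{\cS}$-module over the local ring $R$ because this holds modulo $\m$ (a short exact sequence $0\to RQ^{\cS}\to T^0\to T^1\to 0$ can be constructed by lifting the one over $k$ via projectivity and Nakayama), forcing $P\simeq P^R_\cS$. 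For the order: $P^R_{\cS'}\leq P^R_\cS$ means $\Hom_{\sK^{\rm b}(\proj RQ)}(P^R_\cS,P^R_{\cS'}[1])=0$, and by Proposition~\ref{prop-CB-thm1}(a)(d) this Hom-group is a free $R$-module whose rank is computed by the $E$-invariants exactly as over $k$, so the condition is detected modulo $\m$ and reduces to part (a).

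For part (c), both $R_\p$ and $R_\q$ are local, so by part (b) both $P^{R_\p}_-$ and $P^{R_\q}_-$ are isomorphisms of posets; since localization is compatible with $-\otimes_\bZ-$ (i.e.\ $(P^{R_\p}_\cS)_\q\simeq P^{R_\q}_\cS$), the triangle $\Clus(Q)\to\twosilt R_\p Q\to\twosilt R_\q Q$ commutes, and $(-)_\q$ is the composite of an isomorphism with the inverse of an isomorphism. Part (d) then follows from Proposition~\ref{prop-local-iso}: we verify condition (a$'$), whose clause (ii) is exactly part (c), and whose clause (i) — surjectivity of $(-)_\p:\twosilt RQ\to\twosilt R_\p Q$ — follows because $\twosilt R_\p Q\simeq\Clus(Q)$ by part (b) and every $\cS\in\Clus(Q)$ is hit by the global complex $P^R_\cS\in\twosilt RQ$ (well-defined by Proposition~\ref{prop-cluster-stilt}(b)) with $(P^R_\cS)_\p\simeq P^{R_\p}_\cS$. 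I expect the main obstacle to be part (b), specifically the surjectivity of $P^R_-$: controlling a $2$-term silting complex over a general (not necessarily PID) local ring from its reduction modulo $\m$ requires care, and the cleanest route is probably to show that $H^0(P)$ together with the "support" idempotent recovers a tilting module over $RQ^{\cS}$ by lifting the Bongartz-type sequence, invoking Lemma~\ref{idempotent reduction} to pass between $\twosilt RQ$ and tilting modules over the quotient.
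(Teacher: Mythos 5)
The genuine gap is in part (b), and precisely where you predicted: the surjectivity of $P^R_-$ for a general local ring $R$. Your first attempt assumes $H^0(P)$ is a rigid $RQ$-lattice, but for a $2$-term silting complex $P$ the module $H^0(P)$ is merely a cokernel of a map of projectives and there is no a priori reason it is $R$-projective; moreover Proposition \ref{prop-CB-thm1}(c) requires a PID, as you noticed. Your fallback — lift the Bongartz sequence from $k$ to $R$ to conclude that $H^0(P)$ is a tilting $RQ^{\cS}$-module, ``forcing $P\simeq P^R_{\cS}$'' — does not close the gap: even granting that $H^0(P)$ is a tilting $RQ^{\cS}$-module, nothing in your argument identifies it additively with $M^R_{\cS}$ (over a non-PID local ring you have no classification of rigid modules), and the lifting itself needs $k\otimes_R H^0(P)\simeq M^k_{\cS}$ together with $\Tor$-vanishing, neither of which is established. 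What you are implicitly assuming is exactly a lifting-and-uniqueness statement for $2$-term silting complexes along $RQ\to kQ$. The paper supplies this as a black box: since $RQ$ is semiperfect when $R$ is local, \cite[Proposition 4.3]{IK} gives that $k\otimes_R-:\twosilt RQ\to\twosilt kQ$ is an isomorphism of posets, and (b) then follows by a two-out-of-three argument applied to the composite $\Clus(Q)\xrightarrow{P^R_-}\twosilt RQ\xrightarrow{k\otimes_R-}\twosilt kQ\xrightarrow{H^0}\stilt kQ$, whose total map is $M^k_-$, an isomorphism by (a), and whose last map is an isomorphism by \cite{AIR}. Without this ingredient (or a proof of it) your part (b) does not go through, and since your (d) rests on (b) via Proposition \ref{prop-local-iso}, the dependence propagates.

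The remaining parts are essentially fine and match the paper. In (a), your detour through algebraically closed fields and \cite{Ingalls-Thomas} is avoidable: since $E(\alpha,\beta)=0$ iff $\Ext^1_{kQ}(M^k_\alpha,M^k_\beta)=0$ over any field (base change from $\bZ$, Proposition \ref{prop-CB-thm1}(a)(d)), the order comparison follows directly from the tilting characterization $\gen M^k_{\cS}=\{X\in\mod kQ^{\cS}\mid \Ext^1_{kQ^{\cS}}(M^k_{\cS},X)=0\}$, which is how the paper argues. Your (c) and (d) are the paper's arguments verbatim, and your reduction of the order in (b) to $E$-invariants via freeness and base change from $\bZ$ is correct for the positive parts, though you should also account for the shifted projective summands $RQe[1]$, whose contribution is the support condition $\cS_-\subseteq\cS'_-$ rather than an $E$-invariant.
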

\begin{proof}
(a) Since $k$ is a field, we have a bijection $M_{-}^k : \Clus(Q) \to \stilt kQ$ by Proposition \ref{prop-cluster-stilt}(b). It remains to show that, for $\cS,\cS'\in\Clus(Q)$, $\cS\ge\cS'$ if and only if $M_{\cS}^k\ge M_{\cS'}^k$, that is, $\gen M_{\cS}^k\supseteq\gen M_{\cS'}^k$. Since $M_{\cS}^k$ is a tilting $kQ^{\cS}$-module, $\gen M_{\cS}^k\supseteq\gen M_{\cS'}^k$ holds if and only if $M_{\cS'}^k$ is a $kQ^{\cS}$-module satisfying $\Ext^1_{kQ^{\cS}}(M_{\cS}^k,M_{\cS'}^k)=0$. These are nothing but the conditions (ii) and (i) in Definition \ref{dfn-cluster}(c).

(b) Let $(R,\m,k)$ be a local ring.
Consider the maps
\[
\Clus(Q) \xrightarrow{P^R_{-}} \twosilt RQ  \xrightarrow{k \otimes_{R}-} \twosilt kQ \xrightarrow{H^0} \stilt kQ.
\]
By \cite[Proposition 4.3]{IK} and since $RQ$ is a semi-perfect ring, $k\otimes_R-$ is an isomorphism of posets.
By \cite{AIR} and since the fact that support tilting $kQ$-modules are nothing but support $\tau$-tilting $kQ$-modules, $H^0$ is an isomorphism of posets.
Moreover the composite $H^0\circ(k\otimes_R-)\circ P^R_{-}=M_{-}^k:\Clus(Q)\to\stilt kQ$ is an isomorphism of posets by (a).
Thus the map $P^R_{-}:\Clus(Q)\to\twosilt RQ$ is an isomorphism of posets.

(c)
We have $P_{-}^{R_{\q}} = \Big[\Clus(Q) \xrightarrow{P_{-}^{R_{\p}}} \twosilt R_{\p}Q\xrightarrow{(-)_{\q}} \twosilt R_{\q}Q\Big]$.
Since both of $P_{-}^{R_{\p}}$ and $P_{-}^{R_{\q}}$ are isomorphisms of posets by (b), so is $(-)_{\q} : \twosilt R_{\p}Q \to \twosilt R_{\q}Q$.

(d)
Since $P_{-}^{R_{\p}} = \Big[\Clus(Q) \xrightarrow{P_{-}^{R}} \twosilt RQ\xrightarrow{(-)_{\p}} \twosilt R_{\q}Q\Big]$ is an isomorphism by (b), $(-)_{\p} : \twosilt RQ \to \twosilt R_{\p}Q$ is surjective.
Thanks to (c) and Proposition \ref{prop-local-iso}(a$^{\prime}$)$\Rightarrow$(a), $(-)_{\p} : \twosilt RQ \to \twosilt R_{\p}Q$ is an isomorphism of posets.
\end{proof}

\begin{proof}[Proof of Theorem \ref{thm-cluster-siltm}]
(a) Take any $\p\in\Spec R$. Since
\[P_{-}^{R_{\p}} = \Big[\Clus(Q) \xrightarrow{P_{-}^{R}} \twosilt RQ\xrightarrow{(-)_{\p}} \twosilt R_{\q}Q\Big]\]
and $(-)_{\p} : \twosilt RQ \to \twosilt R_{\p}Q$ are isomorphisms of posets by Lemma \ref{lem-cluster-siltm-local}(b) and (d), so is
$P_{-}^R:\Clus(Q)\to\twosilt RQ$.
Since the injective map $M_{-}^R:\Clus(Q)\to\stilt RQ$ given in Proposition \ref{prop-cluster-stilt} is a composition $H^0\circ P_{-}^R$, we have an injective map $H^0:\twosilt RQ\to\stilt RQ$. 
This is bijective by Proposition \ref{idempotent surjective} and Example \ref{RQ dense}, and an isomorphism of posets by \cite[Proposition A.6]{IK}.

This is bijective by Proposition \ref{idempotent surjective}, and an isomorphism of posets by \cite[Proposition A.6]{IK}.

(b) Thanks to Proposition \ref{prop-cluster-stilt}, it suffices to prove surjectivity.
Let $P\in\twopsilt RQ$. Then Bongartz completion \cite[Lemma 4.2]{IJY} gives $P'\in\sK^{\rm b}(\proj RQ)$ such that $P\oplus P'\in\twosilt RQ$.
By (a), there exists $\cS\in\Clus(Q)$ satisfying $\add (P\oplus P')=\add P^{R}_{\cS}$.
For each $\p\in\Spec R$, the category $\add P^{R_\p}_{\cS}$ is Krull-Schmidt since $\End_{\sD(R_\p Q)}(P^{R_\p}_\alpha)=R_\p$ is a local ring for each $\alpha\in\Cv(Q)$.
Thus there exists a subset $\cS^\p\subset\cS$ such that $\add P_\p=\add P^{R_\p}_{\cS^\p}$.

We claim that $\cS^\p$ is independent of $\p\in\Spec R$.
For each $\p\supseteq\q$ in $\Spec R$, applying $(-)_\q$ for $\add P_\p=\add P^{R_\p}_{\cS^\p}$, we have $\add P^{R_\q}_{\cS^\q}=\add P_\q=\add P^{R_\q}_{\cS^\p}$ and hence $\cS^{\q}=\cS^{\p}$. Since $R$ is ring indecomposable, the claim follows. Thus there exists $\cS'\subseteq\cS$ satisfying $\add P_\p=\add P^{R_\p}_{\cS'}=\add(P^R_{\cS'})_\p$ for each $\p\in\Spec R$.
By \cite[Proposition 2.26]{Iyama-Wemyss-maximal}, we have $\add P=\add P^R_{\cS'}$.
Thus $P_{-}^R:\pClus(Q)\to\twopsilt RQ$ is surjective.

For each $M\in\ptilt RQ$, let $P$ be a (2-term) projective resolution of $M$. Then $P\in\twopsilt RQ$ and $H^0(P)=M$ hold. There exists $\cS\in\pClus(Q)$ satisfying $\add P=\add P_{\cS}^R$. Then $\cS_+\in\pClus_+(Q)$ satisfies $\add M_{\cS_+}^R=\add H^0(P_{\cS}^R)=\add H^0(P)=\add M$. Thus $M_{-}^R:\pClus_+(Q)\to\ptilt RQ$ is surjective.
\end{proof}

\section{Torsion classes on Dynkin quivers}\label{section 4}
For a prime ideal $\p$ of $R$, let $\kappa(\p)=R_\p/\p R_\p$, $\L_\p=R_\p\otimes_R \L$, and we consider a finite dimensional $\kappa(\p)$-algebra $\kappa(\p)\otimes_R\L$.
By \cite[Theorem 3.16]{IK}, there exists a canonical injective map
\[
\Phi_{\rm t} : \tors\L \longrightarrow \bT_R(\L):=\prod_{\p\in\Spec R}\tors(\kappa(\p)\otimes_R\L), \quad \cT \mapsto (\kappa(\p)\otimes_R\cT)_{\p\in\Spec R},
\]
where $\kappa(\p)\otimes_R\cT:=\{\kappa(\p)\otimes_R X \mid X\in\cT\}$ is a torsion class of $\mod(\kappa(\p)\otimes_R\L)$.
One of the fundamental problems is to give an explicit description of the image of this map.
A necessary condition for elements in $\bT_R(\L)$ to belong to $\Im\Phi_{\rm t}$ is the following: 
For prime ideals $\p\supseteq \q$ of $R$, we have a map
\begin{eqnarray*}
&{\rm r}_{\p\q} : \tors(\kappa(\p)\otimes_R\L) \longrightarrow \tors(\kappa(\q)\otimes_R\L), \quad \cT \mapsto \kappa(\q)\otimes_{R_\p}\{X \in \mod\L_\p \mid \kappa(\p)\otimes_{R_\p}X \in \cT\}.&
\end{eqnarray*}
It is known that any element $(\cX^\p)_\p$ in $\Im\Phi_{\rm t}$ is \emph{compatible}, that is, ${\rm r}_{\p\q}(\cX^\p) \supseteq \cX^\q$ holds for any prime ideals $\p\supseteq \q$ of $R$ \cite[Proposition 3.20]{IK}.
We say that a Noetherian $R$-algebra $\L$ is \emph{compatible} if all compatible elements of $\bT_R(\L)$ belong to $\Im\Phi_{\rm t}$.
Then $\tors\L$ can be described as follows:
\begin{equation}\label{r description}
\tors\L \simeq \left\{(\cX^\p)_\p\in \bT_R(\L)  \,\middle|\, {\rm r}_{\p\q}(\cX^\p) \supseteq \cX^\q, {}^{\forall}\p \supseteq \q \in \Spec R \right\}.
\end{equation}

Now we prove our second main result of this paper. It was proved in \cite[Example 4.19]{IK} under the additional assumption that the ring $R$ contains a field.
\begin{thm}\label{thm-torsRQ}
Let $Q$ be a Dynkin quiver, and $R$ a commutative Noetherian ring.
Then $RQ$ is compatible, and we have isomorphisms of posets
\[
\tors RQ \simeq \Hom_{\rm poset}(\Spec R, \twosilt RQ)\simeq \Hom_{\rm poset}(\Spec R, \Clus(Q)).
\]
\end{thm}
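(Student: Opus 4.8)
The plan is to obtain Theorem \ref{thm-torsRQ} by feeding the Dynkin case into Proposition \ref{thm-r-isom} and then converting $\twosilt RQ$ into $\Clus(Q)$ via Theorem \ref{thm-cluster-siltm}; thus the real work is to verify that the Noetherian $R$-algebra $\L=RQ$ satisfies hypotheses (a) and (b) of Proposition \ref{thm-r-isom} when $Q$ is Dynkin. First I would reduce to the case that $R$ is ring indecomposable. Writing $R=\prod_{i=1}^{n}R_i$ with each $R_i$ ring indecomposable (possible since $R$ is Noetherian), we get $RQ=\prod_iR_iQ$, hence $\tors RQ\simeq\prod_i\tors R_iQ$, while $\Spec R=\bigsqcup_i\Spec R_i$ as posets (no order relation runs between distinct components), so $\bT_R(RQ)=\prod_i\bT_{R_i}(R_iQ)$ compatibly with $\Phi_{\rm t}$ and with the compatibility condition; therefore $RQ$ is compatible iff each $R_iQ$ is, and $\Hom_{\rm poset}(\Spec R,\Clus(Q))=\prod_i\Hom_{\rm poset}(\Spec R_i,\Clus(Q))$. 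So it suffices to treat ring indecomposable $R$, in which case Theorem \ref{thm-cluster-siltm}(a) also supplies the poset isomorphism $\twosilt RQ\simeq\Clus(Q)$ needed for the middle term.

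Assume now that $R$ is ring indecomposable. Hypothesis (a) of Proposition \ref{thm-r-isom}, namely that $(-)_{\p}\colon\twosilt RQ\to\twosilt R_{\p}Q$ is an isomorphism of posets for every $\p\in\Spec R$, is exactly Lemma \ref{lem-cluster-siltm-local}(d).

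The substantive point is hypothesis (b): that for each $\p\in\Spec R$ the map $\twosilt R_{\p}Q\to\tors(\kappa(\p)\otimes_{R_\p}RQ)=\tors(\kappa(\p)Q)$, $P\mapsto\gen(\kappa(\p)\otimes_{R_\p}H^0(P))$, is surjective. Since $\kappa(\p)\otimes_{R_\p}-$ is right exact it commutes with taking $H^0$ of a $2$-term complex of projectives, so this map factors as
\[
\twosilt R_{\p}Q\ \xrightarrow{\ \kappa(\p)\otimes_{R_\p}-\ }\ \twosilt\kappa(\p)Q\ \xrightarrow{\ H^0\ }\ \stilt\kappa(\p)Q\ \xrightarrow{\ \gen(-)\ }\ \tors\kappa(\p)Q,
\]
where the first map is an isomorphism because $R_\p$ is local (as in the proof of Lemma \ref{lem-cluster-siltm-local}(b), using that $R_\p Q$ is semiperfect, cf.\ \cite[Proposition 4.3]{IK}), and the second is an isomorphism by \cite{AIR} since over the hereditary algebra $\kappa(\p)Q$ support tilting modules coincide with support $\tau$-tilting modules. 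Hence it is enough to show that every torsion class of $\mod\kappa(\p)Q$ has the form $\gen T$ with $T$ support tilting, and this is the one place where the Dynkin hypothesis is used: $\kappa(\p)Q$ is representation-finite, so a torsion class $\cT$ contains only finitely many indecomposables and therefore equals $\gen(\bigoplus_{X}X)$ with $X$ running over them; in particular $\cT$ is functorially finite, so by \cite{AIR} it equals $\gen T$ for a (basic) support $\tau$-tilting, i.e.\ support tilting, module $T$. This establishes (b).

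With (a) and (b) verified, Proposition \ref{thm-r-isom} yields that $RQ$ is compatible and that $\tors RQ\simeq\Hom_{\rm poset}(\Spec R,\twosilt RQ)$; composing with the poset isomorphism $\twosilt RQ\simeq\Clus(Q)$ of Theorem \ref{thm-cluster-siltm}(a) gives the remaining isomorphism, and unwinding the reduction of the first paragraph delivers the statement for an arbitrary commutative Noetherian ring $R$. I expect (b) to be the only genuine obstacle: the reduction to ring indecomposable $R$ and hypothesis (a) are bookkeeping on top of the results of Sections 2--3, whereas (b) is precisely where the representation-finiteness of the Dynkin algebra $\kappa(\p)Q$ --- every torsion class being functorially finite and hence induced by a support tilting module --- is indispensable.
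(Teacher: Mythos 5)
Your proposal is correct and follows essentially the same route as the paper: reduce to ring indecomposable $R$, verify hypotheses (a) and (b) of Proposition \ref{thm-r-isom} via Lemma \ref{lem-cluster-siltm-local}(d) and the chain $\twosilt R_{\p}Q\simeq\twosilt\kappa(\p)Q\simeq\tors\kappa(\p)Q$ (the last step using that every torsion class over the representation-finite algebra $\kappa(\p)Q$ is functorially finite), then apply Theorem \ref{thm-cluster-siltm} for the second isomorphism. The only difference is that you spell out the product decomposition for the reduction to the ring indecomposable case and the finiteness argument behind $\ftors(\kappa(\p)Q)=\tors(\kappa(\p)Q)$, which the paper delegates to citations.
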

\begin{proof}
By Theorem \ref{thm-cluster-siltm}, it suffices to prove the left isomorphism.
Clearly we can assume that $R$ is ring indecomposable (see the proof of \cite[Corollary 4.18]{IK}).
Thanks to Proposition \ref{thm-r-isom}, it suffices to show that $\L=RQ$ satisfies the conditions (a) and (b) there.

(a) This is immediate from by Lemma \ref{lem-cluster-siltm-local}(d).

(b) By \cite[Proposition 4.3]{IK} and \cite{AIR}, we have isomorphisms of posets
\[
\twosilt R_{\p}Q \xrightarrow[\sim]{\kappa(\p)\otimes_{R_\p}-} \twosilt (\kappa(\p) Q) \xrightarrow[\sim]{\gen} \ftors(\kappa(\p)Q) = \tors(\kappa(\p) Q),
\]
where the last equality holds since $Q$ is Dynkin. 
\end{proof}

\section*{Acknowledgements}
The first author is supported by JSPS Grant-in-Aid for Scientific Research (B) 22H01113, (B) 23K22384. The second author was supported by Grant-in-Aid for JSPS Fellows JP22J01155.

\end{document}